 \newtheorem{theorem}{Theorem}[section]
 \newtheorem{corollary}[theorem]{Corollary}
 \newtheorem{lemma}[theorem]{Lemma}
 \newtheorem{proposition}[theorem]{Proposition}
 \theoremstyle{definition}
 \newtheorem{definition}[theorem]{Definition}
 \theoremstyle{remark}
 \newtheorem{remark}[theorem]{Remark}
 \numberwithin{equation}{section}
 \newtheorem{notation}[theorem]{Notation}
 \def\BMO{{\rm BMO}}
\def\CMO{{\rm CMO}}
\def\cal{\mathcal}
\def\R{{\mathbb R}}
\def\supp{{\rm supp}}
\def \diam {{\rm diam}}
\def \dist {{\rm dist}}
\def \rdist {{\rm \, rdist}}
\def \ec {{\rm ecc}}
\def \ecc {{\rm ecc}}
\newcommand\smland{\raise.2ex\hbox{$\scriptstyle\land$}\hspace{.01cm}}
\newcommand\smlor{\raise.1ex\hbox{$\scriptstyle\lor$}}
\begin{document}
%
%
%
%
%
%
%
%
%
\title[Estimates for compact Calder\'on-Zygmund  operators]
{Off-diagonal and pointwise estimates \\ for compact Calder\'on-Zygmund operators}
\author{Paco Villarroya}

\address{Centre for Mathematical Sciences\\ University of Lund\\ Lund\\ Sweden}

\email{paco.villarroya@maths.lth.se}

\thanks{This work was completed with the support of Spanish project MTM2011-23164.}


\subjclass
{Primary 42B20, 42C40; Secondary 47B07, 47G10}

\keywords{Singular integral, Calder\'on-Zygmund operator, compact operator, off-diagonal estimates}

\date{today}

\begin{abstract}
We prove several off-diagonal and pointwise estimates for singular integral operators that extend compactly on $L^{p}(\mathbb R^{n})$. 
\end{abstract}

\maketitle

\section{Introduction}

An operator
is said to satisfy an
off-diagonal estimate from $L^{p}(\mathbb R^{n})$ into $L^{q}(\mathbb R^{n})$ for $p,q>0$ if there exists a function  $G:[0,\infty )\rightarrow [0,\infty )$ 
vanishing at infinity such that 
\begin{equation*}\label{firstoff}
\| T(f\chi_{E})\chi_{F}\|_{L^{q}(\mathbb R^{n})}\lesssim G(\dist (E,F))\| f\|_{L^{p}(\mathbb R^{n})}
\end{equation*}
for all Borel sets $E,F\subset \mathbb R^{n}$ and all $f\in L^{p}(\mathbb R^{n})$, with implicit constant depending on the operator $T$ and the exponents $p,q$. 
Some authors distinguish between properly off-diagonal estimates, when $E\cap F=\emptyset$, and the so-called on-diagonal estimates, when $E\cap F\neq \emptyset$. However, we will not follow such convention and instead we will always call them off-diagonal estimates.

In the specific case of singular integral operators, the study focuses on the exponents $1\leq p=q<\infty $. Very often, off-diagonal bounds are considered in one of the two following dual forms:
$$
\| T(\chi_{I})\chi_{(\lambda I)^{c}}\|_{L^{1}(\mathbb R^{n})}\lesssim G(\lambda\ell(I))|I|,
\hskip 20pt
\frac{1}{|I|}\int_{I} |T(\chi_{(\lambda I)^{c}})(x)|dx\lesssim G(\lambda \ell(I))
$$
(see \cite{AM} and \cite{HyNa}),
for any cube $I\subset \mathbb R^{n}$ and $\lambda>1$,
where $|I|$ denotes the volume of the cube and $\lambda I$ is a concentric dilation of $I$.

While their use in Analysis is very classical, 
the interest for this type of inequalities in modern Harmonic Analysis 
renewed in the nineties after the publication of new proofs of the T(1) Theorem that used the wavelet decomposition approach (see \cite{AT} and \cite{CoifMeybook} for example). These proofs were based on the development of estimates of the form
\begin{equation}\label{offbound}
|\langle T(\psi_{I}),\psi_{J}\rangle |\lesssim  \Big(\frac{|J|}{|I|}\Big)^{\frac{1}{2}+\frac{\delta}{n}}
\Big(1+\frac{\dist(I,J)}{|I|^{\frac{1}{n}}}\Big)^{-(n+\delta)}
\end{equation}
for all cubes $I,J\subset \mathbb R^{n}$ with $|J|\leq |I|$, 
under the appropriate hypothesis on the operator $T$, the parameter $\delta>0$ and the functions $\psi_{I}, \psi_{J}$ involved.

The importance of off-diagonal inequalities lays mainly on two facts. On the one side, 
they are a satisfactory replacement for pointwise estimates of the operator kernel when these are not available or even when the operator kernel is unknown.
On the other side,  
they completely enclose the almost orthogonality properties of the operator.
For these reasons, they played a crucial role in the solution of the famous Kato's conjecture \cite{AHLMT} about boundedness of square root of elliptic operators and are nowadays extensively used in 
the study of second order elliptic operators. In the field, these estimates are typically established for one-parameter collections of operators $(T_{t})_{t>0}$ and the function $G$ also depends on the parameter $t$ in an appropriate manner
(see \cite{AKMP}, \cite{AKM2}, \cite{AKM}, \cite{HoMA}, \cite{HoMayMcI} and \cite{Ro}).
Finally, it is also worth mentioning that off-diagonal bounds 
provide very valuable information 
for the development of efficient algorithms to compress and rapidly evaluate 
discrete singular operators
(see \cite{BCR} and \cite{YQX}).

In the project {\em A characterization of compactness for singular integrals} I developed, 
a new $T(1)$ Theorem to characterize not only boundedness but also compactness of Singular Integral operators
$$
T(f)(x)=\int f(t)K(x,t)dt
$$
with $K$ a standard Calder\'on-Zygmund kernel. 
The main theorem provides sufficient and necessary conditions for compactness of Calder\'on-Zygmund operators in terms of the kernel decay and the action of the operator over special families of functions:

One of the goals of the current paper is to establish similar type of estimates for singular integral operators that can be extended compactly on $L^{p}(\mathbb R^{n})$ with $1<p<\infty $. 
In \cite{V}, the author proved a characterization of these operators based on a new type of off-diagonal estimates for Calder\'on-Zygmund operators. Now, in the current paper, we aim to improve these bounds in several ways and also obtain some new estimates. 
More explicitly, 
we show in section \ref{mainoff} that, in a broad sense and under the right hypotheses, 
these operators satisfy similar inequalites 
to (\ref{offbound}) but with a new factor $F$
that encodes the extra decay obtained as a consequence of their compactness properties:
\begin{equation}\label{compoffbound}
|\langle T(\psi_{I}),\psi_{J}\rangle |\lesssim  \Big(\frac{|J|}{|I|}\Big)^{\frac{1}{2}+\frac{\delta}{n}}
\Big(1+\frac{\dist(I,J)}{|I|^{\frac{1}{n}}}\Big)^{-(n+\delta)}
F(I,J).
\end{equation}
The focus of this work is actually placed on obtaining a sharp and as detailed as possible 
description of the function $F$ in the different cases under study. 

Furthermore, in section \ref{point&off} we establish pointwise estimates of the action of the operator 
over compactly supported functions. This allows to claim, in a broad sense as well, that the image of a bump
function adapted and supported in a cube behaves as a bump function adapted, although not supported, to the same cube. As before, these estimates explicitly state an extra decay
not present in the classical bounds that is again due to the compactness of the operator.

\section{Notation and Definitions}\label{ecandrdist}

We say that the set $I=\prod_{i=1}^{n}[a_{i},b_{i}]$ is a cube in $\mathbb R^{n}$
if the quantity $|b_{i}-a_{i}|$ is constant when varying the index $i$.
We denote by ${\mathcal Q}_{n}$ the family of all cubes in $\mathbb R^{n}$. 
For every cube $I\subset \mathbb R^{n}$, we denote its centre by $c(I)=((a_{i}+b_{i})/2)_{i=1}^{n}$, its side length by $\ell(I)=|b_{i}-a_{i}|$ and
its volume by $|I|=\ell(I)^{n}$. For any $\lambda >0$, we denote by $\lambda I$, the cube such that $c(\lambda I)=c(I)$ and $|\lambda I|=\lambda^{n}|I|$.

We write $|\cdot |_{p}$ for the $l^{p}$-norm in $\mathbb R^{n}$ with $1\leq p\leq \infty $ and $|\cdot |$ for the modulus of a complex number. Hopefully, the latter notation will not cause any confusion with the one used for the volume of a cube. We denote by $\mathbb B=[-1/2,1/2]^{n}$ and $\mathbb B_{\lambda }=\lambda \mathbb B=[-\lambda/2,\lambda /2]^{n}$.

Given two cubes $I,J\subset \mathbb R^{n}$,
we define $\langle I,J\rangle$ as the unique cube such that it contains $I\cup J$ with the smallest possible side length and
whose center has the smallest possible first coordinate. 
In the last section, this notation will be applied also to 
points, namely $\langle x,y\rangle $, as if they were considered to be degenerate cubes. 

We denote the side length of $\langle I,J\rangle$ by $\diam(I\cup J)$. 
Notice that
\begin{eqnarray*}
\diam(I\cup J) & \approx & \ell(I)/2+|c(I)-c(J)|_{\infty }+\ell(J)/2\\
& \approx & \ell(I)+\dist_{\infty}(I,J)+\ell(J)
\end{eqnarray*}
where $\dist_{\infty }(I,J)$ denotes the set distance between $I$ and $J$ calculated using the norm $| \cdot |_{\infty }$. Actually,
$$
\frac{1}{2}\diam(I\cup J)
\leq \frac{\ell(I)}{2}+|c(I)-c(J)|_{\infty }+\frac{\ell(J)}{2}
\leq \diam(I\cup J).
$$

We define
the relative distance between $I$ and $J$ by
$$
\rdist(I,J)=\frac{\diam(I\cup J)}{\max(\ell(I),\ell(J))}, 
$$
which is comparable to $\max(1,n)$ where $n$ is  
the smallest number of times the larger cube needs to be shifted a distance equal to its side length 
so that it contains the smaller one. The following equivalences hold:
\begin{eqnarray*}
\rdist(I,J) & \approx & 1+\max(\ell(I),\ell(J))^{-1}|c(I)-c(J)|_{\infty }\\
& \approx & 1+\max(\ell(I),\ell(J))^{-1}\dist_{\infty }(I,J).
\end{eqnarray*}

Finally, we define the eccentricity of $I$ and $J$ as
$$
\ec(I,J)=\frac{\min(|I|,|J|)}{\max (|I|,|J|)}.
$$

\begin{definition}
In order to characterize compactness of singular integral operators, we use two sets of 
auxiliary 
bounded functions $L,S, D: [0,\infty )\rightarrow [0,\infty )$ and 
$F: {\mathcal Q}_{n}\rightarrow [0,\infty )$
satisfying 
the following limits
\begin{equation}\label{limitD}
\lim_{x \rightarrow \infty }L(x)=\lim_{x \rightarrow 0}S(x)=\lim_{x \rightarrow \infty }D(x)=0, 
\end{equation}
\begin{equation}\label{limitsF}
\lim_{\ell(I)\rightarrow \infty }F(I)=\lim_{\ell(I)\rightarrow 0}F(I)=\lim_{|c(I)|_{\infty }\rightarrow \infty }F(I)=0.
\end{equation}
\end{definition}

\begin{remark}\label{constants}
Since any dilation ${\mathcal D}_{\lambda}L_{\lambda}(x)=L(\lambda^{-1}x)$, ${\mathcal D}_{\lambda}F(I)=F(\lambda^{-1}I)$ with $L$ and $F$ satisfying (\ref{limitD}), (\ref{limitsF}) respectively still satisfies the same limits, we will often omit all universal constants appearing in the arguments.
\end{remark}

\begin{definition}\label{compCZ}
Let $K:(\mathbb R^{n}\times \mathbb R^{n}) \setminus 
\{ (t,x)\in \mathbb R^{n}\times \mathbb R^{n} : t=x\} \to \mathbb C$. 

We say that $K$ is a
compact Calder\'on-Zygmund kernel if there exist constants $0<\delta \leq 1$, $C>0$ 
and functions $L$, $S$ and $D$ satisfying the limits in (\ref{limitD}), 
such that 
\begin{equation}\label{smoothcompact}
|K(t,x)-K(t',x')|
\leq
C\frac{(|t-t'|_{\infty }+|x-x'|_{\infty })^\delta}{|t-x|_{\infty }^{n+\delta}}
F_{K}(t,x)
\end{equation}
whenever $2(|t-t'|_{\infty }+|x-x'|_{\infty })<|t-x|_{\infty }$, with 
$$
F_{K}(t,x)=L(|t-x|_{\infty })S(|t-x|_{\infty })D(|t+x|_{\infty }).
$$

We say that $K$ is 
is a
standard Calder\'on-Zygmund kernel if (\ref{smoothcompact}) is satisfied with $F_{K}\equiv 1$.
\end{definition}

We first note that, without loss of generality, $L$ and $D$ can be assumed to be non-creasing
while $S$ can be assumed to be non-decreasing. This is possible because, otherwise, we can always define 
$$
L_{1}(x)=\sup_{y\in [x,\infty )}L(y)
\hskip30pt
S_{1}(x)=\sup_{y\in [0,x]}S(y)
\hskip30pt
D_{1}(x)=\sup_{y\in [x,\infty )}D(y)
$$
which bound above $L$, $S$ and $D$ respectively, satisfy the limits in (\ref{limitD}) and are non-creasing
or non-decreasing as requested.

On the other hand, we also denote
$$
F_{K}(t,t',x,x')=L_{2}(|t-x|_{\infty })S_{2}(|t-t'|_{\infty }+|x-x'|_{\infty })
D_{2}\Big(1+\frac{|t+x|_{\infty }}{1+|t-x|_{\infty }}\Big)
$$
and assume, in a similar way as before, that $L_{2}$ and $D_{2}$ are non-creasing while $S_{2}$ is non-decreasing. 
Then, as explained in \cite{V}, (\ref{smoothcompact}) is equivalent to the following smoothness condition 
\begin{equation}\label{smoothcompact2}
|K(t,x)-K(t',x')|
\leq
C\frac{(|t-t'|_{\infty }+|x-x'|_{\infty })^{\delta'}}{|t-x|_{\infty }^{n+\delta'}}F_{K}(t,t',x,x')
\end{equation}
whenever $2(|t-t'|_{\infty }+|x-x'|_{\infty })<|x-t|_{\infty }$, with possibly smaller $\delta'<\delta $. 
This resulting parameter $\delta'$ necessarily satisfies $\delta'<1$ since otherwise the kernel $K$ would be a constant function. 

The proof of the equivalence between both formulations appears in \cite{V}. However, 
to increase readability of the current paper, 
we sketch the proof that (\ref{smoothcompact}) implies (\ref{smoothcompact2}). 
For any $0<\epsilon <\delta $, let $\delta' = \delta - \epsilon$. Then, from (\ref{smoothcompact}) we can write
$$
|K(t,x)-K(t',x')|
\le C 
\frac{(|t-t'|_{\infty }+|x-x'|_{\infty })^{\delta'}}{|t-x|_{\infty }^{n+\delta' }}\tilde{F}_{K}(t,t',x,x')
$$
with 
$$
\tilde{F}_{K}(t,t',x,x')=\frac{(|t-t'|_{\infty }+|x-x'|_{\infty })^{\epsilon}}{|t-x|_{\infty }^{\epsilon}}
F_{K}(t,x).
$$
Then, the functions 
\begin{align*}
L_{2}(y) &= \sup_{|x-t|_{\infty } \geq y} \tilde{F}_{K}(t,t',x,x')^{1/3} \\
S_{2}(y) &= \sup_{|x-x'|_{\infty }+|t-t'|_{\infty } \leq y} \tilde{F}_{K}(t,t',x,x')^{1/3} \\ 
D_{2}(y) &= \sup_{1+\frac{|x+t|_{\infty }}{1+|x-t|_{\infty }} \geq y}\tilde{F}_{K}(t,t',x,x')^{1/3}
\end{align*}
satisfy all the required limits in (\ref{limitD}) and 
$$
\tilde{F}_{K}(t,t',x,x')\leq L_{2}(|t-x|_{\infty })S_{2}(|t-t'|_{\infty }+|x-x'|_{\infty })
D_{2}\Big(1+\frac{|t+x|_{\infty }}{1+|t-x|_{\infty }}\Big) .
$$

As also proved in \cite{V}, 
the smoothness condition 
(\ref{smoothcompact}) and the hypothesis ${\displaystyle \lim_{|t-x|_{\infty }\rightarrow \infty}K(t,x)=0}$ imply the classical decay condition
\begin{equation}\label{decaycompact}
|K(t,x)|
\lesssim
\frac{1}{|t-x|_{\infty }^{n}}
F_{K}(t,x)
\lesssim 
\frac{1}{|t-x|_{\infty }^{n}}
\end{equation}
for all $t,x\in \mathbb R^{n}$ such that $t\neq x$. 
Moreover, it is easy to see that we also get the decay
\begin{equation}\label{decaycompact2}
|K(t,x)|
\lesssim
\frac{1}{|t-x|_{\infty }^{n}}
L(|t-x|_{\infty })S(|t-x|_{\infty })D\Big(1+\frac{|t+x|_{\infty }}{1+|t-x|_{\infty }}\Big),
\end{equation}
which we will use later. Notice the change in the argument of $S$, which is now equal to the argument of $L$.

Finally, we define two more sets of auxiliary functions which we will use in the next section. 
First, 
\begin{equation}\label{F_K}
F_{K}(I_{1},I_{2},I_{3})=L(\ell(I_{1}))S(\ell(I_{2}))D(\rdist(I_{3},\mathbb B))
\end{equation}
and $F_{K}(I)=F_{K}(I,I,I)$. 
Second, 
\begin{equation}\label{F_K2}
\tilde{F}_{K}(I_{1},I_{2},I_{3})=L(\ell(I_{1}))S(\ell(I_{2}))\tilde{D}(\rdist(I_{3},\mathbb B))
\end{equation}
and $\tilde{F}_{K}(I)=\tilde{F}_{K}(I,I,I)$, where
\begin{equation}\label{tildeD}
\tilde{D}(\rdist (I,\mathbb B))=\sum_{j\geq 0}2^{-j\delta }D(\rdist (2^{j}I,\mathbb B)).
\end{equation}
We note that for fixed $\ell(I)$, the Lebesgue Dominated Theorem guarantees that $\tilde{D}$ satisfies 
${\displaystyle \lim_{|c(I)|_{\infty }\rightarrow \infty }\tilde{D}(\rdist (I,\mathbb B))=0}$.

\begin{definition}\label{intrep}
Let $T:\mathcal C_{0}(\R^{n})\to \mathcal C_{0}(\R^{n})'$ be a continuous linear operator. 
We say that
$T$ is associated with a Calder\'on-Zygmund kernel if there exists a function $K$ 
fulfilling Definition \ref{compCZ} such that 
the dual pairing satisfies the following
integral representation
$$
\langle T(f), g\rangle =\int_{\R^{n}}\int_{\R^{n}} f(t)g(x) K(x,t)\, dt \, dx
$$
for all functions $f,g\in {\mathcal C}_{0}(\mathbb R^{n})$
with disjoint compact supports.
\end{definition}

Clearly, the integral converges absolutely since, by (\ref{decaycompact}), we have for 
$d=\dist(\supp f,\supp g)>0$, 
$$
\Big| \int_{\mathbb R^{n}} \int_{\mathbb R^{n}} f(t)g(x)K(x,t)\, dtdx \Big|
\lesssim \| f\|_{L^{1}(\mathbb R^{n})}
\| g\|_{L^{1}(\mathbb R^{n})}
\frac{1}{d^{n}}.
$$

\begin{definition}
Let $0<p\leq \infty $. We say that a bounded function $\phi $ is
an $L^p(\mathbb R^{n})$-normalized bump function adapted to $I$ with constant $C>0$, decay $N\in \mathbb N$ and order $0$, if for all $x\in \mathbb R^{n}$
\begin{equation}\label{waveletdecay}
|\phi(x)|\le C|I|^{-\frac{1}{p}}\Big(1+\frac{|r-c(I)|_{\infty }}{\ell(I)}\Big)^{-N}.
\end{equation}

We say that a continuous bounded function $\phi $ is an $L^p(\mathbb R^{n})$-normalized bump function adapted to $I$ with constant $C>0$, decay $N\in \mathbb N$, order $1$ and parameter $0<\alpha\leq 1$, if (\ref{waveletdecay}) holds 
and for all $t,x\in \mathbb R^{n}$
\begin{equation}\label{waveletderivative}
|\phi(t)-\phi(x)|\le C\Big(\frac{|t-x|_{\infty }}{\ell(I)}\Big)^{\alpha }|I|^{-\frac{1}{p}}\sup_{r\in \langle t,x\rangle}
\Big(1+\frac{|r-c(I)|_{\infty }}{\ell(I)}\Big)^{-N},
\end{equation}
where $\langle t,x \rangle$ denotes the cube containing the points $t$ and $x$ with the smallest possible side length and whose centre has the smallest possible first coordinate. 
\end{definition}
Unless otherwise stated, we will assume the bump functions to be $L^2(\mathbb R^{n})$-normalized.

\begin{definition}\label{RC}
We say that a linear operator $T : \mathcal C_{0}(\R^{n}) \to \mathcal C_{0}(\R^{n})'$
satisfies the weak compactness condition, if
there exists a bounded function $F_{W}$ satisfying (\ref{limitsF}) and 
such that
for any cube $I\subset \mathbb R^{n}$ and any bump functions $\phi_I$, $\varphi_{I}$
adapted to $I$ with constant $C>0$, decay $N$ and order $0$, we have
$$
|\langle T(\phi_I),\varphi_{I}\rangle |
\lesssim C
F_{W}(I)
$$
where the implicit constant only depends on the operator $T$.
\end{definition}

As explained in \cite{V}, this definition admits several other reformulations, 
but they all essentially imply that the dual pairing $\langle T(\phi_I),\varphi_{I}\rangle $ tends to zero when the cube involved is large, small or far away from the origin. 

\begin{definition}
We define $\CMO(\mathbb R^{n})$ as the closure in $\BMO(\mathbb R^{n})$ of the space of continuous functions vanishing at infinity. 
\end{definition}

The following theorem, which is the main result in \cite{V}, 
characterizes compactness of Calder\'on-Zygmund operators. This is the reason why we say that  
the new off-diagonal bounds appearing in the current paper apply to operators that can be extended compactly on $L^{p}(\mathbb R^{n})$.
\begin{theorem}\label{Mainresult}
Let
$T$ be a linear operator associated with a standard Calder\'on-Zygmund kernel. 

Then, $T$ extends to a compact operator on $L^p(\mathbb R)$ for all $1<p<\infty$ if and only if $T$ 
is associated with a compact Calder\'on-Zygmund kernel and it satisfies
the weak compactness condition
and the cancellation conditions
$T(1), T^{*}(1) \in \CMO(\mathbb R)$. 
\end{theorem}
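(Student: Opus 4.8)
This is a compactness counterpart of the David--Journ\'e $T(1)$ theorem, so the plan is to run the classical scheme and keep track, at every step, of the decay carried by $L,S,D$ and $F_{W}$. Since a bounded operator that is compact on $L^{2}$ and bounded on every $L^{p}$ is compact on every $L^{p}$, $1<p<\infty$ (interpolation of compactness), and $L^{p}$-boundedness for all $p$ follows from the standard Calder\'on--Zygmund theory once boundedness on $L^{2}$ is known, it suffices to treat the Hilbert space case. For the sufficiency direction I would first remove the cancellation obstruction: using a smooth, compactly supported wavelet system, form the paraproducts $\Pi_{T(1)}$ and $\Pi^{*}_{T^{*}(1)}$. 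The map $b\mapsto\Pi_{b}$ is bounded from $\BMO$ to $\mathcal L(L^{2})$; when $b$ is continuous and vanishes at infinity one checks directly that $\Pi_{b}$ is associated with a compact Calder\'on--Zygmund kernel and satisfies the weak compactness condition, hence is compact (by a Hilbert--Schmidt truncation, or by the argument below applied to $\Pi_{b}$ itself). As such $b$ are dense in $\CMO$ and the compact operators form a closed ideal, $\Pi_{b}$ is compact for every $b\in\CMO$. Replacing $T$ by $T_{0}=T-\Pi_{T(1)}-\Pi^{*}_{T^{*}(1)}$ we may assume $T_{0}(1)=T_{0}^{*}(1)=0$, while $T_{0}$ still has a compact kernel (a sum $L_{1}S_{1}D_{1}+L_{2}S_{2}D_{2}$ is dominated by $(L_{1}+L_{2})(S_{1}+S_{2})(D_{1}+D_{2})$) and still satisfies the weak compactness condition.

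\textbf{From cancellation to finite rank.} For $T_{0}$ with vanishing $T_{0}(1),T_{0}^{*}(1)$ I would establish the refined almost-orthogonality estimate
\[
|\langle T_{0}\psi_{I},\psi_{J}\rangle|\lesssim \ec(I,J)^{\frac{1}{2}+\frac{\delta}{n}}\,\rdist(I,J)^{-(n+\delta)}\,\tilde F_{K}(\langle I,J\rangle)
\]
for wavelets $\psi_{I},\psi_{J}$ --- i.e.\ (\ref{compoffbound}) with $F$ split into the explicit scale and position factors $L(\ell)$, $S(\ell)$, $D(\rdist(\cdot,\mathbb B))$. This is the usual case analysis: when $I,J$ are well separated one uses the kernel smoothness (\ref{smoothcompact2}) and decay (\ref{decaycompact2}) against the cancellation $\int\psi_{I}=\int\psi_{J}=0$; when they overlap with comparable side lengths one invokes the weak compactness condition; when they overlap with very different side lengths one reduces, using $T_{0}(1)=0$ or $T_{0}^{*}(1)=0$, to the previous cases. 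Each regime produces one of the vanishing factors. Then expand $T_{0}=\sum_{I,J}\langle T_{0}\psi_{I},\psi_{J}\rangle\,\psi_{J}\otimes\psi_{I}$, and for $0<\eta<R$, $M>0$ let $T_{0}^{\eta,R,M}$ be the partial sum over dyadic cubes with $\eta\le\ell(I),\ell(J)\le R$ and $|c(I)|_{\infty},|c(J)|_{\infty}\le M$. Only finitely many dyadic cubes satisfy these constraints, so $T_{0}^{\eta,R,M}$ is a finite sum of rank-one operators, hence compact; and a Schur/Cotlar estimate based on the displayed bound gives $\|T_{0}-T_{0}^{\eta,R,M}\|_{\mathcal L(L^{2})}\to 0$ as $\eta\to0$ and $R,M\to\infty$, the tails being controlled by the supremum of $\tilde F_{K}$ over cubes that are large, small, or far from the origin, which tends to $0$ by (\ref{limitsF}). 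Thus $T_{0}$, hence $T$, is compact on $L^{2}$ and, by the interpolation remark, on every $L^{p}$.

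\textbf{Necessity.} If $T$ is compact on $L^{p}$ it is bounded, so the classical $T(1)$ theorem already gives a standard kernel, the weak boundedness property, and $T(1),T^{*}(1)\in\BMO$. The remaining conclusions all rest on one observation: an $L^{2}$-normalized bump adapted to a cube whose side length tends to $0$ or to $\infty$, or whose centre tends to infinity, converges weakly to $0$, so its image under $T$ converges to $0$ in $L^{2}$. Hence: (i) $|\langle T\phi_{I},\varphi_{I}\rangle|\le\|T\phi_{I}\|_{2}\|\varphi_{I}\|_{2}\to0$ in those regimes, which yields a function $F_{W}$ with the limits (\ref{limitsF}), i.e.\ the weak compactness condition; (ii) testing $T$ against bumps concentrated near points $t$ and $x$ and using the kernel smoothness to pass from averages to pointwise values gives $|K(t,x)|\lesssim|t-x|_{\infty}^{-n}L(|t-x|_{\infty})S(|t-x|_{\infty})D(\cdot)$ with the correct limits, and running backwards the equivalence sketched in the excerpt recovers the smoothness estimate (\ref{smoothcompact}); (iii) for $T(1)\in\CMO$, use a Uchiyama-type description of $\CMO$ (vanishing mean oscillation over cubes that are small, large, or far from the origin): the oscillation of $T(1)$ over a cube $I$ is bounded, after splitting $1=\chi_{3I}+\chi_{(3I)^{c}}$ and pairing with a mean-zero bump adapted to $I$, by a weak-compactness term plus a term controlled by the kernel smoothness against the far part; both vanish in the three regimes, so $T(1)\in\CMO$, and $T^{*}(1)\in\CMO$ likewise.

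\textbf{Main difficulty.} The technical core is the two intertwined points in the sufficiency argument: proving the sharp three-factor almost-orthogonality bound with the scale and position decay genuinely decoupled, and then performing the Schur/Cotlar summation so that it converts into operator-norm convergence of the truncations --- the bookkeeping of $L$, $S$ and $D$ across the different geometric configurations is where the work lies. On the necessity side, the analogous delicate step is extracting the pointwise kernel factor $L\cdot S\cdot D$ (and in particular the mixed argument of $D$) from the mere vanishing of the bilinear pairings $\langle T\phi_{I},\varphi_{J}\rangle$.
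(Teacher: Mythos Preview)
The paper does not actually prove this theorem: it is quoted from the author's earlier work \cite{V} and is stated here only to motivate the off-diagonal estimates of Section~\ref{mainoff}. So there is no ``paper's own proof'' to compare against; the present paper's contribution is precisely the sharpened bounds in Propositions~\ref{symmetricspecialcancellation}--\ref{symmetricspecialcancellation3}, which are ingredients for (a refinement of) the sufficiency direction you outline.

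That said, your sketch is essentially the right architecture and matches what \cite{V} does, as far as one can infer from the present paper's description. A few comments on points you may be underestimating. First, the ``interpolation of compactness'' step is not the Riesz--Thorin theorem; you need Krasnosel'ski\u{\i}-type interpolation (compact on one endpoint, bounded on the other), which is fine here since the standard $T(1)$ theorem already gives $L^{p}$-boundedness. Second, and more seriously, on the necessity side your step (ii) is too optimistic: knowing that $\|T\phi_{I}\|_{2}\to 0$ in the three regimes gives you averaged kernel information, but extracting the \emph{pointwise} smoothness condition~(\ref{smoothcompact}) with the specific product structure $L\cdot S\cdot D$ (and the peculiar argument of $D$) requires a careful limiting and supremum construction --- this is exactly the ``main difficulty'' you flag, and it is genuinely delicate, not a throwaway remark. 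Third, in the sufficiency direction the Schur/Cotlar summation that converts the almost-orthogonality bound into operator-norm convergence of the finite-rank truncations is the place where the precise dependence of $F$ on $I\smland J$, $I\smlor J$, and $\langle I,J\rangle$ (as in Propositions~\ref{symmetricspecialcancellation}--\ref{symmetricspecialcancellation2}) matters; a single factor $\tilde F_{K}(\langle I,J\rangle)$ as you write it is not quite what the paper proves, and the case analysis (distant, close with comparable scales, close with disparate scales) produces different combinations of arguments in $L$, $S$, $D$ that have to be tracked separately for the tail sums to vanish.
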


\section{Off-diagonal estimates for bump functions}\label{mainoff}
In the proof of Theorem \ref{Mainresult}, some off-diagonal estimates 
were developed. Now, we improve these inequalities in several directions:   
by extending the result to $\mathbb R^{n}$, 
by weakening the smoothness requirements of the bumps, 
by shortening the proof and by obtaining a sharper bound for functions with compact support.  

This is the purpose of the three propositions of this section, which describe the action of a compact singular integral operator over bump functions with or without zero mean properties respectively. 
Later, in section \ref{point&off}, we will use these bounds to obtain several pointwise bounds and other off-estimates of a more general type.

We first set up some notation that appears in the statements of the three results. 
We consider $K$ to be a compact Calder\'on-Zygmund kernel with parameter $0<\delta < 1$ and $T$ to be a linear operator with associated kernel $K$ satisfying the weak compactness condition.
We denote by $I\smland J$ and $I\smlor J$ the smallest and the largest of two given cubes $I,J$ respectively. That is, $I\smland J=J$, $I\smlor J=I$
if $\ell(J)\leq \ell(I)$, while $I\smland J=I$, $I\smlor J=J$, otherwise.
We also remind the notation of 
$F_{K}$, $F_{W}$ and $\tilde{F}_{K}$ provided in the previous section.


\begin{proposition}\label{symmetricspecialcancellation}
If 
the special cancellation conditions $T(1)=T^{*}(1)=0$ hold then, 
for all bump functions $\psi_{I}$, $\psi_{J}$ 
adapted and supported on $I$, $J$ respectively,
with constant $C>0$, order one, parameter $\alpha >\delta $ and such that $\psi_{I\smland J}$ has mean zero, 
\begin{equation}\label{twobump21}
|\langle T(\psi_{I}),\psi_{J}\rangle |\lesssim  C^{2} \frac{\ec(I,J)^{\frac{1}{2}+\frac{\delta}{n}}}{
\rdist(I,J)^{n+\delta}}
F(I, J)
\end{equation}
where $F$ is such that:
\begin{enumerate}
\item[\it i)]  $F(I, J)=F_{K}(\langle I,J\rangle ,I\smland J, \langle I,J\rangle )$ when $\rdist (I,J)> 3$,
\item[\it ii)]  $F(I, J)=
\tilde{F}_{K}(I\smlor J ,I\smland J, I\smlor J)+
F_{W}(I\smland J)+F_{K}(I\smland J,I\smland J,I\smlor J)$, otherwise.
\end{enumerate}

\end{proposition}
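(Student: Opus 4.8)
The plan is to split the argument into the two regimes dictated by the value of $\rdist(I,J)$, which govern whether the supports $I$ and $J$ are effectively separated or overlapping. In the separated case, $\rdist(I,J)>3$, the kernel is well-defined on $\supp\psi_I\times\supp\psi_J$ and we may use the integral representation directly. In the overlapping case, $\rdist(I,J)\le 3$, the cubes are comparable in size and position, so the singularity of the kernel is genuinely felt and we must instead exploit the mean-zero hypothesis on $\psi_{I\smland J}$ together with the weak compactness condition. Throughout, by symmetry (replacing $T$ by $T^*$, which also has a compact kernel and satisfies $T^*(1)=0$) we may assume $\ell(J)\le\ell(I)$, so that $I\smland J=J$, $I\smlor J=I$, and $\langle I,J\rangle$ has side length $\approx\diam(I\cup J)\approx\ell(I)\rdist(I,J)$; also $\ec(I,J)=|J|/|I|=(\ell(J)/\ell(I))^n$.

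\emph{Separated case $\rdist(I,J)>3$.} Here $\dist_\infty(I,J)\approx \ell(I)\rdist(I,J)$ and $|x-t|_\infty\approx\ell(I)\rdist(I,J)$ for $t\in I$, $x\in J$. I would use the mean-zero property of $\psi_J$ to subtract a constant and bring in the smoothness of the kernel: writing $\langle T\psi_I,\psi_J\rangle=\int\!\int \psi_I(t)\psi_J(x)K(x,t)\,dt\,dx$ and using $\int\psi_J=0$ to replace $K(x,t)$ by $K(x,t)-K(c(J),t)$, condition (\ref{smoothcompact}) (in the form (\ref{decaycompact2}) combined with the Hölder estimate) yields a gain $(\ell(J)/(\ell(I)\rdist(I,J)))^\delta$ and the decay factor $F_K(x,t)$, which for $t\in I$, $x\in J$ is comparable to $L(\ell(I)\rdist(I,J))S(\ell(J))D(\rdist(\langle I,J\rangle,\mathbb B))$, i.e.\ to $F_K(\langle I,J\rangle, I\smland J,\langle I,J\rangle)$ up to universal constants (absorbed via Remark \ref{constants}). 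Estimating the remaining integrals by the $L^1$ norms of the bumps, $\|\psi_I\|_1\lesssim|I|^{1/2}$ and $\|\psi_J\|_1\lesssim|J|^{1/2}$, and collecting powers of $\ell(J)/\ell(I)$ and $\rdist(I,J)$ gives exactly $\ec(I,J)^{1/2+\delta/n}\rdist(I,J)^{-(n+\delta)}F_K(\langle I,J\rangle,I\smland J,\langle I,J\rangle)$. One must be slightly careful that the smoothness condition only applies when $2|x-c(J)|_\infty<|x-t|_\infty$; since $|x-c(J)|_\infty\le\ell(J)$ and $|x-t|_\infty\gtrsim\ell(I)\rdist(I,J)\gtrsim 3\ell(I)\ge 3\ell(J)$, this holds after possibly shrinking the comparability constant in the regime $\rdist(I,J)>3$.

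\emph{Overlapping case $\rdist(I,J)\le 3$.} Now $\rdist(I,J)\approx 1$ and $\ec(I,J)\approx 1$ is no longer helpful — the prefactor is harmless and everything is encoded in $F(I,J)$. I would decompose $\mathbb R^n$ into the dilates of $I$: write $\psi_I=\psi_I\chi_{3J}+\sum_{j\ge 1}\psi_I\chi_{2^{j+1}J\setminus 2^jJ}$ (using that $I\subset C J$ for a universal $C$ since the cubes are comparable), or symmetrically split the $x$-integration. The terms with $j\ge 1$ are ``far'' pieces: $\psi_J$ mean zero lets us run the same kernel-smoothness argument as above on each annulus, producing $\sum_j 2^{-j\delta}D(\rdist(2^jI,\mathbb B))=\tilde D(\rdist(I,\mathbb B))$ after summation, hence the term $\tilde F_K(I\smlor J,I\smland J,I\smlor J)$. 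For the ``local'' term $\langle T(\psi_I\chi_{CJ}),\psi_J\rangle$, the function $\psi_I\chi_{CJ}$ is (a constant multiple of) a bump adapted to $J$ of order $0$, so the weak compactness condition gives the bound $F_W(J)=F_W(I\smland J)$. Finally one checks whether a leftover term of size $F_K(I\smland J,I\smland J,I\smlor J)$ appears; it does, from the piece of the local integral where one uses the raw kernel decay (\ref{decaycompact2}) on a region of size $\approx\ell(J)$ at distance $\approx\ell(J)$ from the diagonal rather than the weak compactness estimate — for instance when $\ell(J)\ll\ell(I)$ but $J\subset I$, part of $\psi_I$ on $CJ$ need not be a legitimate bump and the kernel decay on $|x-t|_\infty\approx\ell(J)$, $|x+t|_\infty\approx\ell(I)$ contributes $L(\ell(J))S(\ell(J))D(\rdist(I,\mathbb B))=F_K(I\smland J,I\smland J,I\smlor J)$.

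\emph{Main obstacle.} The genuinely delicate point is the overlapping case, specifically the bookkeeping needed to see that exactly the three listed terms in (ii) arise and nothing worse. One has to decompose $\psi_I$ (the larger bump) relative to the smaller cube $J$ so that each piece is either a legitimate $L^2$-normalized order-$0$ bump adapted to $J$ (handled by weak compactness, giving $F_W$), or sits in an annulus away from $J$ where the mean-zero cancellation of $\psi_J$ plus kernel smoothness applies (summing to the $\tilde F_K$ term), tracking the arguments of $L$, $S$, $D$ carefully so the decay factors land on the right cube ($I\smland J$ for the $S$ and the weak-compactness inputs, $I\smlor J$ for the ``position'' argument of $D$). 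Getting the ranges of summation, the adaptedness constants, and the distinction between the $\rdist\le 3$ and $\rdist>3$ thresholds to match up cleanly — rather than any single estimate — is where the real work lies; the individual kernel estimates are routine given (\ref{smoothcompact})–(\ref{decaycompact2}).
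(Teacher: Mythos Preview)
Your separated case ($\rdist(I,J)>3$) matches the paper's argument and is fine.

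The overlapping case, however, has a genuine gap rooted in a misreading of the geometry. You write ``$\ec(I,J)\approx 1$ is no longer helpful'' and ``$I\subset CJ$ for a universal $C$ since the cubes are comparable'', but $\rdist(I,J)\le 3$ does \emph{not} force $|I|\approx|J|$: it only says $\diam(I\cup J)\le 3\ell(I)$, so $J$ may sit inside $I$ with $\ell(J)\ll\ell(I)$. In that regime the target bound carries the factor $\ec(I,J)^{1/2+\delta/n}=(|J|/|I|)^{1/2+\delta/n}$, and your argument only produces the exponent $1/2$. Concretely, your ``local'' piece $\psi_I\chi_{CJ}$ is an $L^2$-normalized order-$0$ bump on $J$ with constant $C(|J|/|I|)^{1/2}$, so weak compactness gives $(|J|/|I|)^{1/2}F_W(J)$, not $(|J|/|I|)^{1/2+\delta/n}F_W(J)$. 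Your annular pieces likewise give at best $(|J|/|I|)^{1/2}$ times a sum over $j$ with $2^j\ell(J)\lesssim\ell(I)$, which is controlled by $L(\ell(J))S(\ell(J))\tilde D(\rdist(J,\mathbb B))$, i.e.\ $\tilde F_K(J)$ --- not $\tilde F_K(I,J,I)$ with $L(\ell(I))$ in the first slot. What you have sketched is essentially the proof of Proposition~\ref{symmetricspecialcancellation2}, which does \emph{not} assume $T(1)=0$ and accordingly gets only $\ec^{1/2}$.

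The missing ingredient is precisely the hypothesis $T(1)=0$, which you invoke for symmetry but never actually use. The paper's device is to first subtract $({\mathcal T}_{c(J)}{\mathcal D}_{\ell(I)}\Phi)(t)\,\psi(c(J),x)$ from $\psi(t,x)$, reducing to the case $\psi_I(c(J))=0$. The subtracted term is $\langle T(\Phi_0),\tilde\psi\rangle$ with $\tilde\psi$ mean-zero, and $T(1)=0$ lets one rewrite this as an integral over $|t-c(J)|>\ell(I)$ (outside $\supp\psi_I$); the resulting dyadic sum starts at scale $\ell(I)$, giving $L(\ell(I))$ and the factor $(|J|/|I|)^{1/2+\delta/n}$ together with $\tilde F_K(I,J,I)$. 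After the reduction, the order-one smoothness of $\psi_I$ with parameter $\alpha>\delta$ yields $|\psi_I(t)-\psi_I(c(J))|\lesssim(|t-c(J)|_\infty/\ell(I))^\alpha|I|^{-1/2}$, so the ``in'' piece on $3J$ becomes a bump on $J$ with constant $C(|J|/|I|)^{1/2+\delta/n}$ (now weak compactness gives the correct exponent), and the ``out'' piece on $3\ell(J)<|t-c(J)|\lesssim\ell(I)$ picks up the same extra $(\ell(J)/\ell(I))^\delta$ via the kernel smoothness, producing $F_K(J,J,I)$. Without this subtraction step you cannot recover the $\delta/n$ in the eccentricity exponent, nor place $\ell(I)$ in the first argument of $\tilde F_K$.
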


\begin{proposition}\label{symmetricspecialcancellation2}
For all bump functions $\psi_{I}$, $\psi_{J}$ 
adapted and supported on $I$, $J$ respectively,
with constant $C>0$, order one, parameter $\alpha >\delta $ and such that $\psi_{I\smland J}$ has mean zero, we have
\begin{equation}\label{twobump22}
|\langle T(\psi_{I}),\psi_{J}\rangle |\lesssim  C^{2} \frac{\ec(I,J)^{\frac{1}{2}}}{
\rdist(I,J)^{n+\delta}}
F(I, J)
\end{equation}
with $F(I, J)=\tilde{F}_{K}(I\smland J)+F_{W}(I\smland J)+F_{K}(I\smland J ,I\smland J, I\smlor J)$ when $\rdist (I,J)\leq 3$.\\
On the other hand, when $\rdist (I,J)>3$, inequality {\rm (\ref{twobump21})} still holds with the same  
$F(I, J)=F_{K}(\langle I,J\rangle ,I\smland J, \langle I,J\rangle )$. 
\end{proposition}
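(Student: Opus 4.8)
The plan is to split the argument into the two regimes already singled out in the statement, namely $\rdist(I,J)>3$ (the ``far'' case) and $\rdist(I,J)\le 3$ (the ``close'' case), and to reduce Proposition~\ref{symmetricspecialcancellation2} to Proposition~\ref{symmetricspecialcancellation} wherever possible. For the far case the claim is literally that inequality~(\ref{twobump21}) holds verbatim; since in that regime the supports $I$ and $J$ are disjoint (as $\rdist(I,J)>3$ forces $\dist_{\infty}(I,J)>\ell(I\smlor J)$, hence no overlap), the dual pairing $\langle T(\psi_I),\psi_J\rangle$ is given by the absolutely convergent integral of Definition~\ref{intrep}, and the special cancellation hypothesis $T(1)=T^*(1)=0$ was never used in the proof of case~$i)$ of Proposition~\ref{symmetricspecialcancellation}. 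So I would simply point out that the proof of part~$i)$ of the previous proposition is insensitive to whether $T(1)$ and $T^*(1)$ vanish — it only exploits the kernel smoothness~(\ref{smoothcompact2}), the mean-zero property of $\psi_{I\smland J}$, and the decay~(\ref{waveletdecay}) of the bump with the larger support — and therefore transfers unchanged. This gives the exponent $\tfrac12+\tfrac{\delta}{n}$ on $\ec(I,J)$ and the factor $F_K(\langle I,J\rangle,I\smland J,\langle I,J\rangle)$ exactly as stated.

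For the close case $\rdist(I,J)\le 3$ the two cubes are comparable in size and location, so $\ec(I,J)\approx\ec(I,J)^{1/2}$ up to constants that Remark~\ref{constants} lets us absorb, and the sharper exponent $\tfrac12+\tfrac{\delta}{n}$ of the previous proposition is not available because we can no longer integrate against the kernel on a region where it is smooth — the supports meet. Here I would decompose $\psi_I$ (say $I=I\smlor J$; the roles are interchangeable after passing to the adjoint) into a piece living near $J$ and a far piece: write $\psi_I=\psi_I\chi_{3J}+\psi_I\chi_{(3J)^c}$. The far piece $\psi_I\chi_{(3J)^c}$ is handled by the kernel representation and the mean-zero property of $\psi_J=\psi_{I\smland J}$, producing a geometric sum over dyadic annuli $2^k J\setminus 2^{k-1}J$ of the smoothness estimate, which is exactly the mechanism generating $\tilde F_K$ via the definition~(\ref{tildeD}) of $\tilde D$; this yields the term $\tilde F_K(I\smland J)$. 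The near piece $\psi_I\chi_{3J}$ is, after normalization, a bump adapted to $I\smland J$ (with a slightly worse constant), so pairing it against $\psi_J$ and invoking the weak compactness condition of Definition~\ref{RC} produces $F_W(I\smland J)$. A residual error from truncating a smooth bump by the sharp cutoff $\chi_{3J}$ — i.e. the boundary layer where $\psi_I\chi_{3J}$ fails to be genuinely adapted — is controlled directly by the decay~(\ref{decaycompact}) of the kernel, and this is the origin of the last term $F_K(I\smland J,I\smland J,I\smlor J)$.

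The main obstacle, as usual in $T(1)$-type arguments, is the bookkeeping of the cutoff in the close case: one must verify that $\psi_I\chi_{3J}$ (possibly smoothed, or handled with a paraproduct-style splitting) can legitimately be fed into the weak compactness condition with a bump constant that is uniformly controlled, and that the geometric series over annuli for the far piece converges with constant independent of the cubes — this is where the monotonicity normalizations of $L,S,D$ and the summability factor $2^{-j\delta}$ in~(\ref{tildeD}) do the work, and where the hypothesis $\alpha>\delta$ on the bump regularity is essential so that the $\alpha$-H\"older estimate~(\ref{waveletderivative}) beats the $\delta$-loss from the kernel. I would also need to check that the relative-distance bookkeeping ($\rdist(2^j(I\smland J),\mathbb B)$ versus $\rdist(I\smlor J,\mathbb B)$, etc.) lines up so that the $D$-factors in $\tilde F_K$ and $F_K$ are evaluated at the claimed arguments; this is routine but must be done carefully since the statement is explicit about \emph{which} cube appears in each slot. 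Everything else — the reduction to the adjoint when $J=I\smlor J$, the absorption of universal constants, the passage between $\ec^{1/2}$ and $\ec^{1/2+\delta/n}$ — is bounded by soft arguments already set up in Section~\ref{ecandrdist}.
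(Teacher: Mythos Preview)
Your treatment of the far case $\rdist(I,J)>3$ is correct and matches the paper exactly: the proof of case a) in Proposition~\ref{symmetricspecialcancellation} uses only the kernel smoothness and the mean zero of $\psi_{I\smland J}$, never the condition $T(1)=0$, and the paper simply invokes this.

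In the close case your approach genuinely differs from the paper's. The paper does \emph{not} decompose $\psi_I$ directly into near and far pieces. Instead it first subtracts the ``value at the center'' term $\Phi_0(t)\psi(c(J),x)$ with $\Phi_0=\mathcal{T}_{c(J)}\mathcal{D}_{\ell(I)}\Phi$, exactly as in Proposition~\ref{symmetricspecialcancellation}, and it is the control of \emph{this subtracted term}---now without $T(1)=0$ available---that forces a dyadic partition of $\Phi_0$ at scales $2^kJ$: the piece $k=0$ is handled by weak compactness (producing $F_W(J)$) and the pieces $k\ge 1$ by the kernel (producing $\tilde F_K(J)$). After the subtraction one has $\psi(c(J),x)=0$, and the $\psi_{in}/\psi_{out}$ decomposition then runs verbatim from cases b1) and b2) of Proposition~\ref{symmetricspecialcancellation}; the term $F_K(J,J,I)$ comes from $\psi_{out}$, and \emph{this} is where $\alpha>\delta$ is used.

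Your direct route $\psi_I=\psi_I\Phi_{3J}+\psi_I(1-\Phi_{3J})$ (with a smooth cutoff) also works and is in fact simpler: the near piece is an order-zero bump adapted to $J$ with constant $\lesssim C(|J|/|I|)^{1/2}$, so weak compactness yields $C^2\ec^{1/2}F_W(J)$; the far piece, via the kernel and the mean zero of $\psi_J$ summed over annuli $2^kJ\setminus 2^{k-1}J$, yields $C^2\ec^{1/2}\tilde F_K(J)$. Two confusions to flag, however. First, your attribution of $F_K(I\smland J,I\smland J,I\smlor J)$ to a ``residual boundary error from the sharp cutoff'' is wrong---with a smooth cutoff there is no such residual, and your argument simply never produces that third term (you obtain a bound slightly sharper than stated). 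Second, your claim that $\alpha>\delta$ is ``essential'' is false for \emph{your} argument: you only use the $L^\infty$ bound and compact support of $\psi_I$, never its H\"older regularity. That hypothesis matters in the paper's approach, for $\psi_{out}$ after the subtraction, not in yours.
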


\begin{proposition}\label{symmetricspecialcancellation3}
For all bump functions $\psi_{I}$, $\psi_{J}$ 
adapted and supported on $I$, $J$ respectively,
with constant $C>0$ and order zero, we have
\begin{equation}\label{twobump23}
|\langle T(\psi_{I}),\psi_{J}\rangle |\lesssim  C^{2} \frac{\ec(I,J)^{\frac{1}{2}}}{
\rdist(I,J)^{n}}
\big(1+\big|\log \ecc(I,J)\big|^{\theta }\big)F(I, J)
\end{equation}
where
\begin{enumerate}
\item[\it i)]  $F(I, J)=F_{K}(\langle I,J\rangle )$ and $\theta=0$ when $\rdist (I,J)> 3$,
\item[\it ii)]  $F(I, J)=
F_{W}(I\smland J)+F_{K}(I\smland J,I\smlor J, I\smlor J)$ and $\theta=1$ when 
$\rdist (I,J)\leq 3$.
\end{enumerate}

\end{proposition}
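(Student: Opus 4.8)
The plan is to follow the dichotomy in the statement, $\rdist(I,J)>3$ versus $\rdist(I,J)\le 3$, and in each regime reduce the pairing to either the kernel decay (\ref{decaycompact2}) or the weak compactness condition of Definition \ref{RC}: since the bumps have order zero they carry no smoothness, so there is nothing to integrate by parts and no cancellation to use, which is exactly why the bound is weaker and picks up a logarithm.

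\emph{The separated case $\rdist(I,J)>3$.} Comparing $\diam(I\cup J)$ with the set distance one checks that $\rdist(I,J)>3$ forces $\dist_\infty(I,J)\gtrsim\max(\ell(I),\ell(J))$; in particular $\supp\psi_I$ and $\supp\psi_J$ are disjoint, so Definition \ref{intrep} gives the integral representation of $\langle T(\psi_I),\psi_J\rangle$. On the domain $t\in I$, $x\in J$ one then has $|t-x|_\infty\approx\diam(I\cup J)$ and $1+\frac{|t+x|_\infty}{1+|t-x|_\infty}\approx\rdist(\langle I,J\rangle,\mathbb B)$, so (\ref{decaycompact2}) together with the monotonicity conventions on $L,S,D$ gives $|K(x,t)|\lesssim\diam(I\cup J)^{-n}F_K(\langle I,J\rangle)$. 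Using $\int_I|\psi_I|\lesssim C|I|^{1/2}$, $\int_J|\psi_J|\lesssim C|J|^{1/2}$ and the identity $|I|^{1/2}|J|^{1/2}\diam(I\cup J)^{-n}=\ec(I,J)^{1/2}\rdist(I,J)^{-n}$ yields (\ref{twobump23}) with $\theta=0$; no annular integration occurs, which is why there is no logarithm here.

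\emph{The close case $\rdist(I,J)\le 3$.} Assume first $\ell(J)\le\ell(I)$, so $I\smland J=J$ (the opposite case is symmetric, splitting the output bump $\psi_J$ and applying the kernel bound from the other variable). Write $\psi_I=\psi_I\chi_{3J}+\psi_I\chi_{(3J)^c}$. For the far piece, $\supp(\psi_I\chi_{(3J)^c})$ is disjoint from $J$ with distance $\gtrsim\ell(J)$, so the integral representation applies; bounding $|\psi_I|\lesssim C|I|^{-1/2}$, $\int_J|\psi_J|\lesssim C|J|^{1/2}$ and $|K(x,t)|\lesssim|t-c(J)|_\infty^{-n}F_K$, the remaining integral over the annulus $\ell(J)\lesssim|t-c(J)|_\infty\lesssim\ell(I)$ contributes a factor $\log(\ell(I)/\ell(J))\approx|\log\ec(I,J)|$, while monotonicity of $L,S,D$ collapses the auxiliary factor to $L(\ell(J))S(\ell(I))D(\rdist(I,\mathbb B))=F_K(I\smland J,I\smlor J,I\smlor J)$; since $\rdist(I,J)^{-n}\approx1$ and $|I|^{-1/2}|J|^{1/2}=\ec(I,J)^{1/2}$ this is within the claimed bound. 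For the near piece, note that, up to the harmless dilation $J\mapsto 3J$ permitted by Remark \ref{constants}, the function $\ec(I,J)^{-1/2}\psi_I\chi_{3J}$ is a bump of order zero adapted to $3J$ with constant $\lesssim C$ (its sup norm is $\le C|I|^{-1/2}=C\ec(I,J)^{1/2}|J|^{-1/2}$ and it is supported in $3J$), and $\psi_J$ is likewise adapted to $3J$ with constant $\lesssim C$; hence Definition \ref{RC} gives $|\langle T(\psi_I\chi_{3J}),\psi_J\rangle|\lesssim C^2\ec(I,J)^{1/2}F_W(3J)\approx C^2\ec(I,J)^{1/2}F_W(I\smland J)$. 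Adding the two pieces and absorbing the constant $1$ into $1+|\log\ec(I,J)|$ gives (\ref{twobump23}) with $\theta=1$.

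The main obstacle is the near piece in the close case: one has to see that the truncation $\psi_I\chi_{3J}$, after extracting the decisive gain $\ec(I,J)^{1/2}$ from its $L^2$-normalization, is admissible in the weak compactness condition as a zero-order bump adapted to a fixed dilate of the small cube; the rest is bookkeeping of which of $L,S,D$ sees the small scale $\ell(I\smland J)$ and which sees the large scale $\ell(I\smlor J)$, dictated by the monotonicity conventions fixed after Definition \ref{compCZ}.
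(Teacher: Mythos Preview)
Your proof is correct and follows essentially the same route as the paper: the same separated/close dichotomy, the same $3J$ truncation of $\psi_I$ in the close case with weak compactness applied to the near piece and the kernel bound \eqref{decaycompact2} on the annular far piece yielding the logarithm. The only differences are cosmetic: you use the sharp cutoff $\chi_{3J}$ instead of the paper's smooth $\Phi_{3J}$ (harmless since only order-zero bumps are needed in Definition~\ref{RC}), and in the far piece you bound $|K(x,t)|$ directly via \eqref{decaycompact2} rather than first splitting $K(t,x)=\big(K(t,x)-K(t,c(J))\big)+K(t,c(J))$ as the paper does---your shortcut is legitimate and slightly cleaner, since the smoothness term the paper extracts is dominated by the decay term anyway.
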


In all cases, the implicit constants depend on the operator $T$ and the parameters $\delta $ and $\alpha $ but they are universal otherwise. Needless to say that the actual value appearing in the condition 
$\rdist (I,J)> 3$ plays no special role and it could be easily changed by any other value strictly larger than one. 

As mentioned before, 
Proposition \ref{symmetricspecialcancellation} is an improvement 
of the analog result in \cite{V}. The result has been extended to non-smooth bump functions
of several dimensions. At the same time, the proof has been largely simplified by using the extra hypothesis that 
the bump functions are compactly supported. Moreover, with this hypothesis, the last factor on the right hand side of the inequality turned out to be strictly smaller than the one appearing in \cite{V}.
In fact, when the bump functions are not longer compactly supported, as it happens in \cite{V}, the inequality 
(\ref{twobump21}) holds with a larger factor depending on six different cubes rather than only three cubes. 
Nevertheless, in both cases, the factors enjoy essentially the same properties 
and so, each of the two estimates suffices to prove compactness of the operator.

We also note that in Proposition \ref{symmetricspecialcancellation2}, the 
hypotheses that $T(1), T^{*}(1)\in \BMO(\mathbb R^{n})$ or $T(1), T^{*}(1)\in \CMO(\mathbb R^{n})$ are not needed. Moreover, in Proposition
\ref{symmetricspecialcancellation3}, the assumption of 
$T$ satisfying the special cancellation conditions $T(1)=T^{*}(1)=0$ does not lead to any further improvement.

\begin{notation}\label{not4proof}
For the following three proofs, we provide some common notation.  
For every cube $I\subset \mathbb R^{n}$, we denote by $\Phi_{I}\in \mathcal S(\mathbb R^{n})$ an 
$L^\infty$-normalized function adapted to $I$ 
with arbitrary large order and decay such that $0\leq \Phi_{I}\leq 1$, 
$\Phi_{I}=1$ in $2I$ and $\Phi_{I}=0$ in $(4I)^{c}$. 
This implies that $\Phi_{I}(x)=1$ for all $|x-c(I)|_{\infty }\leq \ell(I)$ while 
$\Phi_{I}(x)=0$ for all $|x-c(I)|_{\infty }> 2\ell(I)$.

As customary, we define the translation and dilation operators by ${\mathcal T}_{a}f(x)=f(x-a)$ and 
${\mathcal D}_{\lambda }f(x)=f(\lambda^{-1}x)$  respectively with $x,a\in \mathbb R^{n}$ and $\lambda>0$.
We also define $w_{I} (x)=1+\ell(I)^{-1}|x-c(I)|_{\infty }$
and for any function $\psi =\psi_{1}\otimes \psi_{2}$ of tensor product type, we write 
$\Lambda (\psi)=\langle T(\psi_{1}),\psi_{2}\rangle $. 

Finally, by symmetry
we can assume that $\ell(J)\leq \ell(I)$ and so, $I\smland J=J$ while $I\smlor J=I$.
\end{notation}

\begin{proof}[\it Proof of Proposition \ref{symmetricspecialcancellation}]
Let $\psi (t,x)=\phi_{I}(t)\psi_{J}(x)$ which, by hypothesis, is supported and adapted to $I\times J$ with constant $C^{2}$, decay $N$, order $1$, parameter $\alpha >\delta $ and, most importantly, it has mean zero in the variable $x$.

{\bf a)} We first assume that $3\ell(I)<\diam (I\cup J)$ which  implies $(5I)\cap J=\emptyset $ and so, 
$\diam(I\cup J)=\ell(I)/2+|c(I)-c(J)|_{\infty }+\ell(J)/2\leq \ell(I)+|c(I)-c(J)|_{\infty }$.

Then, since $|t-c(I)|_{\infty }\leq \ell(I)/2$, we have 
\begin{equation}\label{equivdiam}
|t-c(J)|_{\infty }
\leq \ell(I)/2+|c(I)-c(J)|_{\infty }\leq \diam(I\cup J)
\end{equation}
and
$$
|t-c(J)|_{\infty }\geq |c(I)-c(J)|_{\infty }-|t-c(I)|_{\infty }
\geq \ell(I)+|c(I)-c(J)|_{\infty }-3\ell(I)/2
$$
$$
\geq \diam(I\cup J)-\diam(I\cup J)/2 =\diam(I\cup J)/2.
$$

On the other hand, 
$3\ell(I)<\diam(I\cup J)\leq \ell(I)+|c(I)-c(J)|_{\infty }$ also implies $2\ell(I)<|c(I)-c(J)|_{\infty }$ and 
since $|x-c(J)|_{\infty }\leq \ell(J)/2$, we get  
$$
|t-c(J)|_{\infty }\geq |c(I)-c(J)|_{\infty }-|t-c(I)|_{\infty }
$$
$$
\geq 2\ell(I)-\ell(I)/2
\geq 3\ell(J)/2\geq 3|x-c(J)|_{\infty }.
$$
The last inequality implies that the support of $\psi $ is disjoint with the diagonal and so, 
we can use the Calder\'on-Zygmund kernel representation to write
$$
\Lambda(\psi)=\int \int \psi(t,x) K(t, x)\, dtdx
=\int_{J}\int_{I} \psi(t,x) (K(t, x)-K(t, c(J)))\, dtdx
$$
where the second equality is due to the zero mean of $\psi$ in the variable $x$.
Now, we denote $Q_{I,J}=\{t\in \mathbb R^{n}: \diam(I\cup J)/2<|t-c(J)|_{\infty }\leq \diam(I\cup J)\}$. Then,  
by the smoothness condition (\ref{smoothcompact2}) of a compact Calder\'on-Zygmund kernel and 
the monotonicity properties of $L$, $S$ and $D$, we bound as follows:
\begin{align*}
&\hskip10pt|\Lambda (\psi)|
\lesssim 
\int_{J}
\int_{I\cap Q_{I,J}}\hspace{-.5cm}
|\psi (t, x)| \frac{|x-c(J)|_{\infty }^{\delta}}{|t-c(J)|_{\infty }^{n+\delta }}
\\
&\hskip60pt 
L(|t-c(J)|_{\infty })S(|x-c(J)|_{\infty })D\Big(1+\frac{|t+c(J)|_{\infty }}{1+|t-c(J)|_{\infty }}\Big)\, dx dt
\\
&\lesssim \| \psi\|_{L^{1}(\mathbb R^{2n})}\frac{\ell(J)^{\delta }}{\diam(I\cup J)^{n+\delta }}
L(\diam(I\cup J))S(\ell(J))
D\Big(1+\frac{|c(J)|_{\infty }}{1+\diam(I\cup J)}\Big)
\\
&\lesssim C^{2}|I|^{\frac{1}{2}}|J|^{\frac{1}{2}}
\frac{\ell(J)^{\delta }}{\diam(I\cup J)^{n+\delta }}
L(\ell(\langle I,J\rangle))S(\ell(J))
D(\rdist(\langle I,J\rangle ,\mathbb B))
\\
&= C^{2} \Big(\frac{|J|}{|I|}\Big)^{\frac{1}{2}+\frac{\delta }{n}}
\Big(\frac{\diam(I\cup J)}{\ell(I)}\Big)^{-(n+\delta )}
F_{K}(\langle I,J\rangle, J,\langle I,J\rangle ) 
\end{align*}
as stated. To completely finish this case, 
we explain in more detail the reasoning used to obtain the bounds for $D$ used in the second and third inequalities above.
Since
$
|x|_{\infty }\leq (|x-t|_{\infty }+|x+t|_{\infty })/2
$, 
we have
$$
1+\frac{|x|_{\infty }}{1+|t-x|_{\infty }}\leq 1+\frac{1}{2}+\frac{|t+x|_{\infty }}{1+|t-x|_{\infty }}
\leq \frac{3}{2}\Big(1+\frac{|t+x|_{\infty }}{1+|t-x|_{\infty }}\Big).
$$
Then, in the domain of integration,
$$
1+\frac{|c(J)|_{\infty }}{1+\diam(I\cup J)}
\leq 1+\frac{|c(J)|_{\infty }}{1+|t-c(J)|_{\infty }}
\leq \frac{3}{2}\Big(1+\frac{|t+c(J)|_{\infty }}{1+|t-c(J)|_{\infty }}\Big)
$$
and, since $D$ is non-creasing, we have
$$
D\Big(1+\frac{|t+c(J)|_{\infty }}{1+|t-c(J)|_{\infty }}\Big)\leq D\Big(1+\frac{|c(J)|_{\infty }}{1+\diam(I\cup J)}\Big)
$$
omitting constants. 

On the other hand,
since $|c(I)|_{\infty }-|c(J)|_{\infty }\leq |c(I)-c(J)|_{\infty }\leq \diam(I\cup J)$, we can bound below
the numerator of the argument of $D$ in the last expression by
\begin{align*}
1+\diam(I\cup J)+|c(J)|_{\infty } 
&\geq 1+\frac{1}{2}\diam(I\cup J)+\frac{1}{2}(|c(I)|_{\infty }-|c(J)|_{\infty })+|c(J)|_{\infty }
\\
&=1+\frac{1}{2}\diam(I\cup J)+\frac{1}{2}(|c(I)|_{\infty }+|c(J)|_{\infty })
\\
&\geq \frac{1}{2}\big(1+\diam(I\cup J)+\frac{1}{2}|c(I)+c(J)|_{\infty }\big).
\end{align*}
Then, 
\begin{align*}
1+\frac{|c(J)|_{\infty }}{1+\diam(I\cup J)})
&\geq \frac{1}{2}\frac{1+\diam(I\cup J)+|c(I)+c(J)|_{\infty }/2}{1+\diam(I\cup J)}
\\
&\geq \frac{1}{3}\Big(\frac{3}{2}+\frac{|c(I)+c(J)|_{\infty }/2}{\diam(I\cup J)}\Big).
\end{align*}
Finally, since $|(c(I)+c(J))/2-c(\langle I,J\rangle)|_{\infty }\leq \ell(\langle I,J\rangle)/2$
and $\ell(\langle I,J\rangle)=\diam(I\cup J)$, we bound below previous expression by
$$
\frac{1}{3}\Big(\frac{3}{2}+\frac{|c(\langle I,J\rangle )|_{\infty }}{\diam(I\cup J)}-\frac{1}{2}\Big)
\geq \frac{1}{3}\Big(1+\frac{|c(\langle I,J\rangle )|_{\infty }}{\max(\ell(\langle I,J\rangle),1)}\Big)
= \frac{1}{3}\rdist(\langle I,J\rangle ,\mathbb B ).
$$

{\bf b)} We now assume that $\diam (I\cup J)\leq 3\ell(I)$ which implies $1\leq \rdist(I,J)\leq 3$.
In this case, we first show that we can assume $\psi(c(J),x)=0$ for any $x\in \mathbb R^{n}$.
This assumption comes from the substitution of $\psi(t,x)$ by
\begin{equation}\label{subtraction1}
\psi(t,x)-({\mathcal T}_{c(J)}{\mathcal D}_{\ell(I)}\Phi)(t) \psi(c(J),x)
\end{equation}
where $\Phi =\Phi_{\mathbb B}$ as described in Notation \ref{not4proof}. 
Then, we only need to prove that the subtracted term satisfies the desired bound. 

We denote $\tilde{\psi}(x)=\psi(c(J),x)$. Since $\psi_{I}$ and $\psi_{J}$ are adapted to $I$ and $J$ respectively with constant $C>0$ and decay $N$ for any $N\in \mathbb N$, we have
$$
|\tilde{\psi}(x)|\leq C^{2}|I|^{-\frac{1}{2}}|J|^{-\frac{1}{2}}w_{J}(x)^{-N}.
$$
Then, $\| \tilde{\psi}\|_{L^{1}(\mathbb R^{n})}\leq C^{2}|I|^{-\frac{1}{2}}|J|^{\frac{1}{2}}$. 
We also recall that $\tilde{\psi}$ is supported on $J$ and has mean zero.

Now, we write $\lambda=\ell(I)/\ell(J)\geq 1$ and take $k\in \mathbb N$ so that $2^{k}\leq \lambda < 2^{k+1}$. Then,  
$$
{\cal T}_{c(J)}{\mathcal D}_{\ell(I)}\Phi
={\cal T}_{c(J)}{\mathcal D}_{\lambda \ell(J)}\Phi .
$$
To simplify notation, we write
$\Phi_{0}={\cal T}_{c(J)}{\mathcal D}_{\ell(I)}\Phi \in \mathcal S(\mathbb R^{n})$ and 
$\Phi_{1}=1-\Phi_{0}$. We note that $\Phi_{1}$ is a smooth bounded function supported on $|t-c(J)|_{\infty }>\lambda \ell(J)$.  
By the classical theory, we know that $T(1)$ can be defined as a distribution acting on the space of compactly supported functions with mean zero in the following way
\begin{align*}
\langle T(1),\tilde{\psi}\rangle 
&=\langle T(\Phi_{0}),\tilde{\psi}\rangle 
+\int \! \int \Phi_{1} (t)\tilde{\psi}(x)
K(t,x)dt dx
\\
&=\langle T(\Phi_{0}),\tilde{\psi}\rangle 
+\int \! \int \Phi_{1} (t)\tilde{\psi}(x)
(K(t,x)-K(t, c(J)))dt dx
\end{align*}
where the second equality is due to the mean zero of $\tilde{\psi }$.
Notice that, since
$
|x-c(J)|_{\infty }\leq \ell(J)/2
\leq \ell(J)2^{k-1}\leq 2^{-1}|t-c(J)|_{\infty }
$, 
the supports of $\Phi_1$ and $\tilde{\psi}$
are disjoint and so the integral in the first line converges absolutely. Then, the hypothesis that $T(1)=0$ implies 
$$
\langle T(\Phi_0),\tilde{\psi}\rangle 
=-\int \! \int \Phi_{1} (t) \tilde{\psi}(x)
(K(t,x)-K(t, c(J)))dt dx.
$$
Moreover, since $2|x-c(J)|_{\infty }<|t-c(J)|_{\infty }$, we can use the 
the smoothness condition (\ref{smoothcompact2}) of a compact Calder\'on-Zygmund kernel
to write
\begin{align*}
|\langle T(\Phi_0), \tilde{\psi}\rangle\rangle |&\leq \int_{J} \int_{\lambda \ell(J)<|t-c(J)|_{\infty }}
 |\Phi_{1}(t)||\tilde{\psi}(x)| \frac{|x-c(J)|_{\infty }^{\delta }}{|t-c(J)|_{\infty }^{n+\delta}}
\\
&\hskip40pt L(|t-c(J)|_{\infty })S(|x-c(J)|_{\infty })
D\Big(1+\frac{|t-c(J)|_{\infty }}{1+|t-c(J)|_{\infty }}\Big)
\, dtdx.
\end{align*}
Then, by the reasoning applied in the previous case, we have
\begin{align*}
|\langle T(\Phi_0), \tilde{\psi}\rangle\rangle |
&\lesssim \| \tilde{\psi}\|_{L^1(\mathbb R^{n})}\ell(J)^{\delta}L(\lambda \ell(J))S(\ell(J))
\\
&\int\limits_{2^{k}\ell(J)<|t-c(J)|_{\infty }} 
\frac{1}{|t-c(J)|_{\infty }^{n+\delta}}D\Big(1+\frac{|c(J)|_{\infty }}{1+|t-c(J)|_{\infty }}\Big)dt .
\end{align*}
Now, we rewrite the last integral as
\begin{align*}
\sum_{j\geq k}\hskip5pt
&\int\limits_{2^{j}\ell(J)<|t-c(J)|\leq 2^{j+1}\ell(J)} 
\frac{1}{|t-c(J)|_{\infty }^{n+\delta}}D\Big(1+\frac{|c(J)|_{\infty }}{1+|t-c(J)|_{\infty }}\Big)
dt
\\
&\lesssim \sum_{j\geq k}
D\Big(1+\frac{|c(J)|_{\infty }}{1+2^{j+1}\ell(J)}\Big)\frac{(2^{j+1}\ell(J))^{n}}{(2^{j}\ell(J))^{n+\delta}}
\\
&\lesssim \ell(J)^{-\delta }\sum_{j\geq k}2^{-j\delta }
D\Big(1+\frac{|c(J)|_{\infty }}{1+2^{j+1}\ell(J)}\Big)
\\
&= \ell(J)^{-\delta }
2^{-k\delta }\sum_{j\geq 0}2^{-j\delta }D\Big(1+\frac{|c(J)|_{\infty }}{1+2^{j+k+1}\ell(J)}\Big)
\\
&\lesssim \ell(I)^{-\delta }
\sum_{j\geq 0}2^{-j\delta }D\Big(1+\frac{|c(J)|_{\infty }}{1+2^{j}\ell(I)}\Big)
\\
&\lesssim \ell(I)^{-\delta }
\sum_{j\geq 0}2^{-j\delta }D(\rdist (2^{j}I,\mathbb B))
\\
&
=\ell(I)^{-\delta }
\tilde{D}(\rdist (I,\mathbb B))
\end{align*}
with $\tilde{D}$ as in (\ref{tildeD}). We now detail the step taken in the last inequality:
since $|c(I)-c(J)|_{\infty }\leq \diam(I\cup J)\leq 3\ell(I)$, we have that 
$$
4\Big(1+\frac{|c(J)|_{\infty }}{1+2^{j}\ell(I)}\Big)\geq 4+\frac{|c(I)|_{\infty }}{1+2^{j}\ell(I)}-\frac{3}{2^{j}}
\geq 1+\frac{|c(I)|_{\infty }}{1+2^{j}\ell(I)}=D(\rdist (2^{j}I,\mathbb B)) .
$$
Then, we write
\begin{align*}
|\langle T(\Phi_0), \tilde{\psi}\rangle\rangle |
&\lesssim C^{2}|I|^{-\frac{1}{2}}|J|^{\frac{1}{2}}
\left(\frac{\ell(J)}{\ell(I)}\right)^{\delta }
L(\lambda \ell(J))S(\ell(J))
\tilde{D}(\rdist (I,\mathbb B))
\\
&= C^{2}  \left(\frac{|J|}{|I|}\right)^{\frac{1}{2}+\frac{\delta }{n}}
\tilde{F}_{K}(I,J,I)
\end{align*}
which is the first term in the stated bound. This finishes the justification of the assumption
$\psi(c(J),x)=0$ for any $x\in \mathbb R^{n}$.

Now, we decompose $\psi$ in the following way:
\begin{align*}
\psi&=\psi_{out}+\psi_{in}
\\
\psi_{in}(t,x)&= \psi(t,x)\Phi_{3J}(t).
\end{align*}

b1) We first prove that $\psi_{in}$ is adapted to $J\times J$ with order zero, decay $N$ and constant
$
C^{2}\left(|J|/|I|\right)^{\frac{1}{2}+\frac{\delta}{n}}
$.

By the assumption $\psi(c(J),x)=0$, the fact that
$\psi$ is supported and adapted to $I\times J$ with order one and parameter $\alpha $ and that 
$\psi_{in}$ is supported on $3J\times J$,
we have for all  $t\in 3J$ and all $x\in J$,
\begin{align*}
|\psi_{in}&(t,x)|
=|\psi(t,x)-\psi(c(J),x)|\Phi_{3J}(t)
\\
&\lesssim C\Big(\frac{|t-c(J)|_{\infty }}{\ell(I)}\Big)^{\alpha }  
|I|^{-\frac{1}{2}} 
\hspace{-.2cm} 
\sup_{r\in \langle t, c(J) \rangle}\hspace{-.1cm} \Big(1+\frac{|r-c(I)|_{\infty }}{\ell(I)}\Big)^{-N}\hspace{-.3cm} \chi_{3J}(t)
C|J|^{-\frac{1}{2}}w_{J} (x)^{-N}
\\
&\lesssim C^{2}\Big(\frac{\ell(J)}{\ell(I)}\Big)^{\alpha } |I|^{-\frac{1}{2}} 
\chi_{3J}(t)
|J|^{-\frac{1}{2}}w_{J} (x)^{-N}
\\
&\lesssim C^{2 }\left(\frac{|J|}{|I|}\right)^{\frac{1}{2}+\frac{\delta }{n}}
|J|^{-\frac{1}{2}} w_{J} (t)^{-N}
|J|^{-\frac{	1}{2}}w_{J} (x)^{-N}
\end{align*}
since $\delta <\alpha $ and $|J|\leq |I|$. 
Notice that we also used $|t-c(J)|_{\infty }\leq 3\ell(J)/2$. 

Therefore $\psi_{in} $ is adapted to $J\times J$ with order zero and the stated constant and so, 
by the weak compactness property of $T$ we get
$$
|\Lambda (\psi_{in})| 
\lesssim C^{2} \left(\frac{|J|}{|I|}\right)^{\frac{1}{2}+\frac{\delta }{n}}
F_{W}(J)
$$
which ends this case.

b2) We now work with $\psi_{out}$. 
In this case, by the extra assumption again and the support of $\psi $, we have the following decay
\begin{align}\label{uno0}
\nonumber
|\psi_{out}(t,x)|&\leq |\psi (t,x)-\psi(c(J),x)|
\\
\nonumber
&
\lesssim C \Big(\frac{|t-c(J)|_{\infty }}{\ell(I)}\Big)^{\alpha }|I|^{-1/2}  
\chi_{I}(t)C|J|^{-1/2}w_{J} (x)^{-N}
\\
&= C^{2}\Big(\frac{|t-c(J)|_{\infty }}{\ell(I)}\Big)^{\alpha }
\varphi_{I\times J}(t,x)
\end{align}
by denoting 
$\varphi_{I\times J}(t, x)=|I|^{-1/2}\chi_{I}(t)|J|^{-1/2}w_{J} (x)^{-N}$. 

Due to the support of $\psi$, we get
$|t-c(I)|_{\infty }\leq \ell(I)/2$ while, by the calculations in \eqref{equivdiam} and the hypothesis of this case, we also have 
$$
|t-c(J)|_{\infty }
\leq \diam(I\cup J)\leq 3\ell(I).
$$
Moreover, due to the support of $\psi_{out}$, we have $|t-c(J)|_{\infty }\geq 3\ell(J)$ and
$|x-c(J)|_{\infty }\leq \ell(J)/2$.
The last two inequalities imply $2|x-c(J)_{\infty }|<|t-c(J)|_{\infty }$
and so,
we use the integral representation 
and the mean zero of $\psi_{out}$ in the variable $x$
to write
$$
\Lambda (\psi_{out})
=\int \int \psi_{out}(t,x) (K(t,x)-K(t,c(J))) \, dtdx .
$$

This, together with the bound of $\psi_{out}$ calculated in (\ref{uno0})
and the smoothness condition (\ref{smoothcompact2}) of a compact Calder\'on-Zygmund kernel,
allow us to bound in the following way:
\begin{align*}
|\Lambda (\psi_{out})|
&\lesssim \frac{C^{2}}{\ell(I)^{\alpha }}
\int\limits_{J} \int\limits_{3\ell(J)<|t-c(J)|_{\infty }\leq 3\ell(I)}\hspace{-1cm} |t-c(J)|_{\infty }^{\alpha }
\varphi_{I\times J}(t, x)
|K(t, x)-K(t, c(J))|
\, dtdx
\\
&\lesssim \frac{C^{2}}{\ell(I)^{\alpha }}
\| \varphi_{I\times J}\|_{L^{\infty }(\mathbb R^{2n})}
\int\limits_{J} \int\limits_{3\ell(J)<|t-c(J)|_{\infty }\leq 3\ell(I)}\hspace{-.6cm} |t-c(J)|_{\infty }^{\alpha }
\frac{|x-c(J)|_{\infty }^\delta}{|t-c(J)|_{\infty }^{n+\delta}}
\\
&\hskip30pt
L(|t-c(J)|_{\infty })
S(|x-c(J)|_{\infty })D\Big(1+\frac{|t+c(J)|_{\infty }}{1+|t-c(J)|_{\infty }}\Big) 
\, dtdx
\\
&\lesssim \frac{C^{2}}{\ell(I)^{\alpha }}|I|^{-1/2}|J|^{-1/2}L(\ell (J))S(\ell (J))
D\Big(1+\frac{|c(J)|_{\infty }}{1+\ell(I)}\Big)
\\
&\int\limits_{|x-c(J)|<\ell(J)/2} |x-c(J)|_{\infty }^\delta dx 
\int\limits_{3\ell(J)<|t-c(J)|_{\infty }\leq 3\ell(I)}\frac{1}{|t-c(J)|_{\infty }^{n+\delta-\alpha }}dt.
\end{align*}

The first integral can be bounded by
$$
\int\limits_{|x-c(J)|<\ell(J)/2} |x-c(J)|_{\infty }^\delta dx 
\lesssim |J|^{1+\frac{\delta }{n}}
$$
and, since $\delta <\alpha $,
the second integral is bounded by 
$$
\int\limits_{3\ell(J)<|t-c(J)|_{\infty }\leq 3\ell(I)}\frac{1}{|t-c(J)|_{\infty }^{n+\delta-\alpha }}dt
\lesssim (3\ell(I))^{\alpha -\delta}-(3\ell(J))^{\alpha -\delta}
\lesssim \ell(I)^{\alpha -\delta} .
$$ 
On the other hand, since $|c(I)-c(J)|_{\infty }\leq \diam(I\cup J)\leq 3\ell(I)$, we have as before 
$$
4(1+\frac{|c(J)|_{\infty }}{1+\ell(I)})
\geq 4+\frac{|c(I)|_{\infty }}{1+\ell(I)}-3
\gtrsim \rdist (I,\mathbb B) .
$$

Finally then,
\begin{align*}
|\Lambda (\psi_{out})|
&\lesssim C^{2}|I|^{-1/2}|J|^{-1/2}
L(\ell (J))S(\ell (J))
D(\rdist (I,\mathbb B))
|J|^{1+\frac{\delta }{n}}
\ell(I)^{-\delta}
\\
&\leq C^{2}\left( \frac{|J|}{|I|} \right)^{\frac{1}{2} + \frac{\delta }{n}} 
F_{K}(J,J,I).
\end{align*}

\end{proof}

\begin{proof}[\it Proof of Proposition \ref{symmetricspecialcancellation2}]
As before, $\psi (t,x)=\phi_{I}(t)\psi_{J}(x)$ is supported and adapted to $I\times J$ with constant $C^{2}$, 
decay $N$, order $1$, parameter $\alpha >\delta$ 
 and it has mean zero in the variable $x$. We divide the proof into the same cases as before. 


{\bf a)} When $3\ell(I)<\diam (I\cup J)$, 
exactly the same reasoning of case a) in the proof of Proposition \ref{symmetricspecialcancellation2} holds since 
the only properties needed are the mean zero of $\psi_{J}$ and the smoothness property of the compact 
Calder\'on-Zygmund kernel.

{\bf b)} We now assume that $\diam (I\cup J)\leq 3\ell(I)$.
As before, we first show that we can assume $\psi(c(J),x)=0$ for any $x\in \mathbb R^{n}$.
This assumption comes again from the substitution of $\psi(t,x)$ by
\begin{equation}\label{subtraction2}
\psi(t,x)-({\mathcal T}_{c(J)}{\mathcal D}_{\ell(I)}\Phi)(t) \psi(c(J),x) .
\end{equation}
But now we need to prove that the subtracted term satisfies the desired bound without the use of the condition $T(1)=0$. We remind the notation $\Phi_{0}={\mathcal T}_{c(J)}{\mathcal D}_{\ell(I)}\Phi$ with 
$\Phi=\Phi_{\mathbb B}$ as defined in Notation \ref{not4proof}.

We denote $\tilde{\psi}(x)=\psi(c(J),x)$ which, as before, 
satisfies the decay
$$
|\tilde{\psi}(x)|\leq C^{2}|I|^{-1/2}|J|^{-1/2}w_{J}(x)^{-N}
$$
and so, it is a 
bump function supported and adapted to $J$ with order zero and constant $C^{2}|I|^{-1/2}$.

Let $J_{k}=2^{k}J$ for $k\in \mathbb N$, $k\geq 0$ and 
let $\Phi_{J_{k}}$ be bump functions $L^{\infty}$-adapted to $J_{k}$ and supported on $4J_{k}$
as defined in Notation \ref{not4proof}.  
We define now $\psi_{0}=\Phi_{J_{0}}$ and $\psi_{k}=\Phi_{J_{k}}-\Phi_{J_{k-1}}$ for $k\geq 1$
which satisfy $\sum_{k\geq 0}\psi_{k}(x)=1$ for all $x\in \mathbb R^{n}$. Therefore, we have
$$
|\langle T(\Phi_{0}),\tilde{\psi}\rangle |
\leq \sum_{k\geq 0}|\langle T(\Phi_{0}\cdot \psi_{k}),\tilde{\psi}\rangle |
$$
with a finite sum due to the compact support of $\Phi_{0}$ which implies $2^{k-1}\ell(J)
\leq |t-c(J)|_{\infty }\leq 2\ell(I)$. 

Now, for $k=0$, since $\Phi_{0} \cdot |J|^{-1/2}\Phi_{J_{0}}$ is supported on $4J$
and $L^{2}$-adapted to $J$, we can apply weak compactness condition to obtain
$$
|\langle T(\Phi_{0}\cdot \Phi_{J_{0}}),\tilde{\psi}\rangle |
\lesssim C^{2}|I|^{-1/2}|J|^{1/2}F_{W}(J) .
$$

When $k\geq1$, due to the supports of $\psi_{k}$ and $\tilde{\psi}$, 
we have that $2^{k-1}\ell(J)<|t-c(J)|_{\infty }<2^{k+1}\ell(J)$ and
$|x-c(J)|_{\infty }<\ell(J)/2$ respectively. This implies that 
$2|x-c(J)|_{\infty }\leq \ell(J)\leq |t-c(J)|_{\infty }$ and so, 
we can use the integral representation, the mean zero of $\tilde{\psi}$
and 
the smoothness condition (\ref{smoothcompact2}) of a compact Calder\'on-Zygmund kernel
to write 
\begin{align*}
 \sum_{k\geq 1}|\langle T(\Phi_{0}\psi_{k}),\tilde{\psi}\rangle |
&=\sum_{k\geq 1}\Big| \int \! \int
\Phi_{0}(t)\psi_{k}(t)\tilde{\psi}(x)
(K(t,x)-K(t,c(J))dtdx\Big|
\\
&\leq \sum_{k\geq 1}\int\limits_{J}
\int\limits_{2^{k-1}\ell(J)<|t-c(J)|_{\infty }<2^{k+1}\ell(J)}
|\tilde{\psi}(x)| \frac{|x-c(J)|_{\infty }^{\delta}}{|t-c(J)|_{\infty }^{n+\delta }}
\\
&\hskip20pt L(|t-c(J)|_{\infty })S(|x-c(J)|_{\infty })
D(1+\frac{|t+c(J)|_{\infty }}{1+|t-c(J)|_{\infty }})dtdx
\\
&
\leq \| \tilde{\psi}\|_{L^{1}(\mathbb R^{n})}\ell(J)^{\delta }
L(\ell(J))S(\ell(J))
\\
&\sum_{k\geq 1}D\Big(1+\frac{|c(J)|_{\infty }}{1+2^{k+1}\ell(J)}\Big)
\int\limits_{2^{k-1}\ell(J)<|t-c(J)|_{\infty }}
\frac{1}{|t-c(J)|_{\infty }^{n+\delta }}dt
\\
&\lesssim C^{2}\left(\frac{|J|}{|I|}\right)^{\frac{1}{2}}\ell(J)^{\delta }L(\ell(J))S(\ell(J))
\sum_{k\geq 1}\frac{D(\rdist(2^{k}J,\mathbb B))}{2^{k\delta }\ell(J)^{\delta}}
\\
&\lesssim C^{2}\left(\frac{|J|}{|I|}\right)^{\frac{1}{2}}
L(\ell(J))S(\ell(J))\tilde{D}(\rdist(J,\mathbb B))
\\
&
= C^{2}\left(\frac{|J|}{|I|}\right)^{\frac{1}{2}}
\tilde{F}_{K}(J,J,J)
\end{align*}
which is the first term of the stated bound.

This finishes the justification of the assumption
$\psi(c(J),x)=0$. From here, the proof that $\Lambda (\psi_{in })$ satisfies the required bounds 
follows exactly the same steps 
as the one in cases b1) and b2) in the proof of Proposition \ref{symmetricspecialcancellation}.

\end{proof}

\begin{proof} [\it Proof of Proposition \ref{symmetricspecialcancellation3}]
Now, the function $\psi (t,x)=\phi_{I}(t)\psi_{J}(x)$ is supported and adapted to $I\times J$ with constant $C^{2}$, decay $N$ and order zero but it does not necessarily have mean zero. 

{\bf a)} As before, we first assume that $3\ell(I)<\diam (I\cup J)$. By the calculations in the proof of Proposition
\ref{symmetricspecialcancellation}, we have 
$$
\diam(I\cup J)/2\leq |t-c(J)|_{\infty }\leq \diam(I\cup J)
$$
and
$
|x-c(J)|_{\infty }\leq \ell(J)/2
$, 
which again imply
$
|t-c(J)|_{\infty }\geq 3|x-c(J)|_{\infty }
$. 
Then, the support of $\psi $ is disjoint with the diagonal
and we can use the Calder\'on-Zygmund kernel representation to write
$$
\Lambda(\psi)=\int \! \int \psi(t,x) K(t,x)\, dtdx.
$$

Now, with the same notation $Q_{I,J}=\{ t\in \mathbb R^{n}: \diam(I\cup J)/2<|t-c(J)|_{\infty }\leq \diam(I\cup J)\}$ 
and the kernel decay described in (\ref{decaycompact2}), 
we bound as follows:
\begin{align*}
|\Lambda (\psi)|
&\leq \int_{J}
\int_{I\cap Q_{I,J}}
|\psi (t,x)||K(t,x)|\, dtdx
\\
&
\lesssim \int_{J}
\int_{I\cap Q_{I,J}}
|\psi (t, x)|\frac{1}{|t-c(J)|_{\infty }^{n}}
\\
&\hskip50pt 
L(|t-c(J)|_{\infty })S(|t-c(J)|_{\infty })
D\Big(1+\frac{|t+c(J)|_{\infty }}{1+|t-c(J)|_{\infty }}\Big)
dx dt
\\
&\lesssim \frac{\| \psi\|_{L^{1}(\mathbb R^{2n})}}{\diam(I\cup J)^{n}}L(\diam(I\cup J))S(\diam(I\cup J))
D\Big(1+\frac{|c(J)|_{\infty }}{1+\diam(I\cup J)}\Big)
\\
&\lesssim 
\frac{C^{2}|I|^{\frac{1}{2}}|J|^{\frac{1}{2}}}{\diam(I\cup J)^{n}}
L(\ell(\langle I\cup J\rangle ))S(\ell(\langle I\cup J\rangle ))
D(\rdist (\langle I\cup J\rangle ,\mathbb B))
\\
&= C^{2}\Big(\frac{|J|}{|I|}\Big)^{\frac{1}{2}}
\Big(\frac{\diam(I\cup J)}{\ell(I)}\Big)^{-n}
F_{K}(\langle I, J\rangle ,\langle I, J\rangle ,\langle I, J\rangle ) .
\end{align*}

{\bf b)} We now assume that $\diam (I\cup J)\leq 3\ell(I)$ 
and we decompose $\psi$ in the same way as before:
$
\psi=\psi_{out}+\psi_{in}
$
with
$
\psi_{in}(t,x)= \psi(t,x)\Phi_{3J}(t)
$ 
and divide the analysis into the same cases.

%

b1) We claim that $\psi_{in}$ is adapted to $J\times J$ with order zero and constant
$
C^{2}\left(|J|/|I|\right)^{\frac{1}{2}}
$.
Since $\psi $ is adapted to $I\times J$ and 
$\psi_{in}$ is supported on $3J\times J$,
we have for all  $t\in 3J$ and all $x\in J$,
$$
|\psi_{in}(t,x)|
=|\psi(t,x)|\Phi_{3J}(t)
\lesssim C^{2}|I|^{-\frac{1}{2}}\chi_{J}(t) |J|^{-\frac{1}{2}}w_{J}(x)^{-N}
$$
$$
\lesssim C^{2} \left(\frac{|J|}{|I|}\right)^{\frac{1}{2}}
|J|^{-\frac{1}{2}} w_{J} (t)^{-N}
|J|^{-\frac{	1}{2}}w_{J} (x)^{-N}.
$$
This proves the claim and so, 
by the weak compactness property of $T$, we get 
$$
|\Lambda(\psi_{in})| 
\lesssim C^{2} \left(\frac{|J|}{|I|}\right)^{\frac{1}{2}}
F_{W}(J).
$$

b2) We now work with $\psi_{out}$
for which we have the decay
$$
|\psi_{out}(t,x)|\leq |\psi (t,x)|\lesssim |I|^{-1/2}  
\chi_{I}(t)|J|^{-1/2}w_{J}(x)^{-N}.
$$

Moreover, the calculations in the analog case b2) of the proof of Proposition \ref{symmetricspecialcancellation} show that 
on the support of $\psi$ we have that $|t-c(I)|_{\infty }\leq \ell(I)/2$ and
$$
|t-c(J)|_{\infty }
\leq \diam(I\cup J)\leq 3\ell(I)
$$
while due to the support of $\psi_{out}$, we also have 
$|t-c(J)|_{\infty }\geq 3\ell(J)$ and
$|x-c(J)|_{\infty }\leq \ell(J)/2$.
Then, $2|x-c(J)_{\infty }|<|t-c(J)|_{\infty }$
and 
we can use the integral representation to bound in the following way:
\begin{align*}
|\Lambda (\psi_{out})|&=\Big|\int \int \psi_{out}(t,x) K(t,x) \, dtdx \Big|
\\
&
\leq
\int\limits_{J}\hskip2pt \int\limits_{3\ell(J)<|t-c(J)|_{\infty }\leq 3\ell(I)}
|\psi_{out}(t,x)|
|K(t,x)-K(t,c(J))|
\, dtdx
\\
&\hskip10pt +
\int\limits_{J}\hskip2pt \int\limits_{3\ell(J)<|t-c(J)|_{\infty }\leq 3\ell(I)}
|\psi_{out}(t,x)|
|K(t,c(J))|
\, dtdx .
\end{align*}

The first term can be bounded by a constant times
$$
C^{2}\left( \frac{|J|}{|I|} \right)^{\frac{1}{2}+\frac{\delta}{n}}L(\ell(J))S(\ell(J))D(\rdist(I,\mathbb B))
$$
exactly in the same way as in case b2) in the proof of Proposition \ref{symmetricspecialcancellation}. 
On the other hand, from 
the decay of a compact Calder\'on-Zygmund kernel stated in (\ref{decaycompact2}) 
and the decay for $\psi_{out}$, we can bound 
the second term by a constant times
\begin{align*}
\int\limits_{J} &\int\limits_{3\ell(J)<|t-c(J)|_{\infty }\leq 3\ell(I)}
|\psi_{out}(t, x)|
\frac{1}{|t-c(J)|_{\infty }^{n}}
\\
&\hskip50pt L(|t-c(J)|_{\infty })S(|t-c(J)|_{\infty })
D\Big(1+\frac{|t+c(J)|_{\infty }}{1+|t-c(J)|_{\infty }}\Big)
\, dtdx
\\
&\leq \| \psi_{out}\|_{L^{\infty }(\mathbb R^{2n})}
L(\ell(J))S(\ell(I))D\Big(1+\frac{|c(J)|_{\infty }}{1+\ell(I)}\Big)
\\
&\hskip50pt|J| \int\limits_{3\ell(J)<|t-c(J)|_{\infty }\leq 3\ell(I)}\hspace{-.2cm}
\frac{1}{|t-c(J)|_{\infty }^{n}}
\, dt
\\
&\leq C^{2}|I|^{-\frac{1}{2}}|J|^{-\frac{1}{2}}
L(\ell(J))S(\ell(I))D(\rdist(I,\mathbb B)) 
|J| (\log(3\ell(I))-\log(3\ell(J)))
\\
&= C^{2} \left( \frac{|J|}{|I|} \right)^{\frac{1}{2}}\log \left(\frac{\ell(I)}{\ell(J)}\right)
\, F_{K}(J,I,I) .
\end{align*}

\end{proof}

\section{Pointwise and off-diagonal estimates for general functions}\label{point&off}
In this last section, 
we provide several pointwise estimates of the action of the operator over general functions and over bump functions, Proposition \ref{pointwiseexpression} and Corollary \ref{corpointwiseexpression} respectively. 
Moreover, in Proposition \ref{lastoff} we prove a new off-diagonal inequality for general functions.

We start with some technical results which, despite being  
well-known for bounded singular operators,
we hereby reproduce here their proofs for compact singular operators in order to highlight the role played by compactness in the gain of decay and smoothness. 

\begin{lemma}\label{pointdef0}
Let $T$ be a linear operator associated with a standard Calder\'on-Zygmund kernel. 
Let $\Phi \in \mathcal S(\mathbb R^{n})$ such that it is positive, it is supported on 
$\mathbb B=[-\frac{1}{2},\frac{1}{2}]^{n}$ and $\int \Phi(x)dx=1$. We denote $\Phi_{x,\epsilon }(y)=\epsilon^{-n}\Phi(\epsilon^{-1}(y-x))$.

Let $f$ be an integrable function with compact support in a cube $I\subset \mathbb R^{n}$. 
Then, for all $x\notin 3I$ there exists the limit of $\langle T(f), \Phi_{x,\epsilon }\rangle $ when $\epsilon $ tends to zero.
\end{lemma}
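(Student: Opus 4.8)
The plan is to prove that the limit exists by identifying it explicitly. I claim that for every $x\notin 3I$,
$$
\lim_{\epsilon\to 0}\langle T(f),\Phi_{x,\epsilon}\rangle=\int f(t)K(x,t)\,dt ,
$$
the integral on the right being absolutely convergent: since $x\notin 3I$ means $|x-c(I)|_{\infty}>3\ell(I)/2$, every $t\in\supp f\subseteq I$ satisfies $|x-t|_{\infty}\geq|x-c(I)|_{\infty}-|t-c(I)|_{\infty}>\ell(I)$, and then the decay bound (\ref{decaycompact}) gives $|K(x,t)|\lesssim\ell(I)^{-n}$ on $\supp f$, so the integral is controlled by $\ell(I)^{-n}\|f\|_{L^{1}(\mathbb R^{n})}$.

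First I would fix $x\notin 3I$ and note that for $\epsilon$ small enough (depending on $I$) the support of $\Phi_{x,\epsilon}$, which is contained in $\{y:|y-x|_{\infty}\leq\epsilon/2\}$, is disjoint from $I$, and in fact $|y-t|_{\infty}\geq|x-t|_{\infty}/2$ for all such $y$ and all $t\in I$. The pairing $\langle T(f),\Phi_{x,\epsilon}\rangle$ is then given by the integral representation
$$
\langle T(f),\Phi_{x,\epsilon}\rangle=\int\!\!\int f(t)\,\Phi_{x,\epsilon}(y)\,K(y,t)\,dt\,dy ,
$$
which is Definition \ref{intrep} when $f\in\mathcal C_{0}(\mathbb R^{n})$ and, for $f$ merely integrable and compactly supported, follows by approximating $f$ in $L^{1}$ by continuous functions supported in $2I$, the double integral being absolutely convergent and uniformly dominated because $K$ is bounded on the compact set $\supp\Phi_{x,\epsilon}\times 2I$.

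Using $\int\Phi_{x,\epsilon}(y)\,dy=1$, I would then write
$$
\langle T(f),\Phi_{x,\epsilon}\rangle-\int f(t)K(x,t)\,dt=\int\!\!\int f(t)\,\Phi_{x,\epsilon}(y)\bigl(K(y,t)-K(x,t)\bigr)\,dt\,dy .
$$
For $y\in\supp\Phi_{x,\epsilon}$ and $t\in I$ one has $2|y-x|_{\infty}\leq\epsilon<|y-t|_{\infty}$ once $\epsilon$ is small, so the smoothness condition (\ref{smoothcompact}) with $F_{K}\equiv 1$ gives $|K(y,t)-K(x,t)|\lesssim|y-x|_{\infty}^{\delta}\,|y-t|_{\infty}^{-(n+\delta)}\lesssim\epsilon^{\delta}\,|x-t|_{\infty}^{-(n+\delta)}$. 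Inserting this, and using $\int\Phi_{x,\epsilon}=1$ together with $|x-t|_{\infty}>\ell(I)$ on $\supp f$, I obtain
$$
\Bigl|\langle T(f),\Phi_{x,\epsilon}\rangle-\int f(t)K(x,t)\,dt\Bigr|\lesssim\epsilon^{\delta}\,\ell(I)^{-(n+\delta)}\,\|f\|_{L^{1}(\mathbb R^{n})},
$$
which tends to $0$ as $\epsilon\to 0$. This proves the claim and hence the lemma.

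The computation itself is merely a combination of the decay (\ref{decaycompact}) and smoothness (\ref{smoothcompact}) estimates; the only slightly delicate point is the justification of the integral representation for a non-continuous $f$, handled by the $L^{1}$-approximation argument indicated above. I expect that to be the one step worth spelling out, everything else being routine.
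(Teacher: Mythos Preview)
Your proof is correct and uses the same underlying ingredients as the paper --- the integral representation for disjointly supported functions together with the kernel smoothness estimate --- but organizes them slightly differently. The paper proves only that $(\langle T(f),\Phi_{x,\epsilon}\rangle)_{\epsilon>0}$ is Cauchy, by comparing two values $\epsilon_{1},\epsilon_{2}$ and bounding the difference by $|\epsilon_{1}-\epsilon_{2}|^{\delta}\ell(I)^{-\delta}\|f\|_{L^{1}}$; it then \emph{defines} $T(f)(x)$ as the resulting limit, and only afterwards (in Lemma~\ref{contTf}) identifies that limit with $\int f(t)K(x,t)\,dt$. You instead go directly to the candidate limit and prove convergence, effectively merging the present lemma with the first half of Lemma~\ref{contTf}. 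Your route is marginally more economical; the paper's separation has the mild expository advantage of making clear that existence of the limit does not rely on knowing its value. Your remark about extending the integral representation from $f\in\mathcal C_{0}$ to $f\in L^{1}$ by approximation is a point the paper's proof passes over silently, so you are in fact being more careful there.
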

\begin{definition}\label{pointdef} By previous lemma, we can define
$$
T(f)(x)=\lim_{\epsilon \rightarrow 0}\langle T(f), \Phi_{x,\epsilon }\rangle .
$$
\end{definition}
\begin{proof}
We check that  
$(\langle T(f), \Phi_{x,\epsilon }\rangle)_{\epsilon>0}$ is a Cauchy sequence.
Let $x\in \mathbb R^{n}\backslash (3I)$ fixed and we choose $\epsilon_{1},\epsilon_{2}<2\ell(I)/5$. 

Then, for all $t\in \supp f$ we have
$\ell(I)<|t-x|_{\infty }$ while for all we get $y\in \supp \Phi_{x,\epsilon_{i}}$, 
$|y-x|_{\infty }\leq \epsilon_{i} /2<\ell(I)/2$. Both inequalities imply 
$$
|t-y|_{\infty }\geq  |t-x|_{\infty }- |x-y|_{\infty }\geq \ell(I)/2>0.
$$ 
Hence, 
$f(t)$ and 
$\phi_{x,\epsilon_{i}}(y)$ have disjoint compact supports and, by the integral representation, we can write
$$
\langle T(f), \Phi_{x,\epsilon_{i} }\rangle
=\int \! \int f(t)\Phi_{x,\epsilon_{i} }(y)K(t,y)dtdy
$$
and so,
$$
\langle T(f), \Phi_{x,\epsilon_{1} }\rangle -
\langle T(f), \Phi_{x,\epsilon_{2} }\rangle 
=\int \! \int f(t)\Phi(y)(K(t,x+\epsilon_{1}y)-K(t,x+\epsilon _{2}y))dtdy .
$$

Now, for all
$y\in \supp \Phi$, we have $|y|_{\infty }\leq 1/2$ and so, 
\begin{align*}
2|(\epsilon_{1}-\epsilon_{2}) y|_{\infty }&<\epsilon_{1}+\epsilon_{2}<\frac{4}{5}\ell(I)
\leq \ell(I)-\frac{\epsilon_{1}}{2}
< |t-x|_{\infty }-\frac{\epsilon_{1}}{2}
\\
&\leq |t-x-\epsilon_{1}y|_{\infty }+\epsilon_{1}|y|_{\infty }-\frac{\epsilon_{1}}{2}
\leq |t-x-\epsilon_{1}y|_{\infty }.
\end{align*}

Therefore, 
we can apply the smoothness condition of the kernel to bound in the following way:
$$
|\langle T(f), \Phi_{x,\epsilon_{1} }\rangle -
\langle T(f), \Phi_{x,\epsilon_{2} }\rangle |
\leq \int \! \int |f(t)| |\Phi(y)|
\frac{|\epsilon_{1}-\epsilon_{2}|^{\delta }|y|_{\infty }^{\delta }}{|t-x-\epsilon_{1}y|_{\infty }^{n+\delta}} dt dy
$$
$$
\leq |\epsilon_{1}-\epsilon_{2}|^{\delta }
\| \Phi\|_{L^{\infty }(\mathbb R^{n})}\int |f(t)|\int\limits_{\ell(I)/2<|t-\epsilon_{1}y-x|_{\infty }} 
\frac{1}{|t-x-\epsilon_{1}y|_{\infty }^{n+\delta}} dt dx
$$
$$
\lesssim |\epsilon_{1}-\epsilon_{2}|^{\delta }\| f\|_{L^{1}(\mathbb R^{n})}\frac{1}{\ell(I)^{\delta}}.
$$
Notice we used that
$
|t-x-\epsilon_{i}y|_{\infty }\geq  |t-x|_{\infty }- \epsilon_{i}|y|_{\infty }\geq \ell(I)- \epsilon_{i}/2\geq \ell(I)/2
$.  
This proves that $(\langle T(f), \Phi_{x,\epsilon }\rangle)_{\epsilon>0}$ is Cauchy.
\end{proof}

In all forthcoming results, we consider $T$ to be a linear operator associated with a compact Calder\'on-Zygmund kernel $K$ with parameter $0<\delta <1$. We do not assume on $T$ any other hypotheses like 
weak boundedness or weak compactness or $T(1)$ belonging to any space in particular.

\begin{lemma}\label{contTf} 
Let $f$ an integrable function with compact support in a cube $I$. 
Then, 
$T(f)$ admits the following representation as a function
\begin{equation}\label{intrep}
T(f)(x)=\int f(t)K(x,t)dt
\end{equation}
for all $x\notin 3I$. Moreover, $T(f)$ is H\"older-continuous in $\mathbb R^{n}\backslash (3I)$ satisfying 
\begin{align*}
|T(f)(x)-T(f)(x')|
&\lesssim L(\ell(\langle I,x\rangle ))D(\rdist(\langle I,x\rangle ,\mathbb B))
\\
&\hskip30pt \frac{|x-x'|_{\infty }^{\delta }}{\ell(I)^{\delta}}
S(|x-x'|_{\infty })
|I|^{-1}\|f\|_{L^{1}(\mathbb R^{n})}.
\end{align*}
\end{lemma}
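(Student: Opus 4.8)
The plan is to first identify the limit in Definition \ref{pointdef} with the kernel integral, and then estimate increments of that integral using the smoothness of $K$; the only delicate point is a geometric comparison already carried out in Section \ref{mainoff}.

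\emph{Representation as a function.} Fix $x\notin 3I$, so that $|x-c(I)|_\infty>\tfrac32\ell(I)$ and hence $|x-t|_\infty>\ell(I)$ for all $t\in\supp f\subset I$. For $\epsilon$ small enough (say $\epsilon<\ell(I)/2$) the functions $\Phi_{x,\epsilon}$ (as in Lemma \ref{pointdef0}) and $f$ have disjoint compact supports, so the defining kernel integral representation applies and, after Fubini,
$$\langle T(f),\Phi_{x,\epsilon}\rangle=\int f(t)\Big(\int\Phi_{x,\epsilon}(y)K(y,t)\,dy\Big)\,dt.$$
For each such $t$ and every $y\in\supp\Phi_{x,\epsilon}$ one has $2|y-x|_\infty\le\epsilon<|y-t|_\infty$, so \eqref{smoothcompact} (with $F_K\lesssim 1$) together with $\int\Phi_{x,\epsilon}=1$ gives
$$\Big|\int\Phi_{x,\epsilon}(y)K(y,t)\,dy-K(x,t)\Big|\le\int\Phi_{x,\epsilon}(y)\,|K(y,t)-K(x,t)|\,dy\lesssim\frac{\epsilon^{\delta}}{\ell(I)^{n+\delta}},$$
uniformly in $t\in\supp f$. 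Multiplying by $|f(t)|$ and integrating shows $\langle T(f),\Phi_{x,\epsilon}\rangle\to\int f(t)K(x,t)\,dt$ as $\epsilon\to0$, which both reproves that the limit exists (Lemma \ref{pointdef0}) and identifies it, giving \eqref{intrep}.

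\emph{H\"older estimate.} Let $x,x'\notin 3I$ with $2|x-x'|_\infty<|x-t|_\infty$ for all $t\in\supp f$, which is in particular the case when $|x-x'|_\infty<\ell(I)/2$; this is the range in which the stated bound operates. By \eqref{intrep}, $T(f)(x)-T(f)(x')=\int f(t)\big(K(x,t)-K(x',t)\big)\,dt$, and the smoothness condition in its reformulated shape \eqref{smoothcompact2} bounds, for $t\in\supp f$,
$$|K(x,t)-K(x',t)|\lesssim\frac{|x-x'|_\infty^{\delta}}{|x-t|_\infty^{n+\delta}}\,L(|x-t|_\infty)\,S(|x-x'|_\infty)\,D\Big(1+\frac{|x+t|_\infty}{1+|x-t|_\infty}\Big).$$
For $t\in I$ and $x\notin 3I$ one has $|x-t|_\infty\approx\diam(I\cup\{x\})=\ell(\langle I,x\rangle)\ge\ell(I)$. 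Hence $|x-t|_\infty^{-(n+\delta)}\le\ell(I)^{-(n+\delta)}=\ell(I)^{-\delta}|I|^{-1}$; by monotonicity of $L$, $L(|x-t|_\infty)\lesssim L(\ell(\langle I,x\rangle))$; and, since $(x+t)/2\in\langle I,x\rangle$ gives $|x+t|_\infty\ge 2|c(\langle I,x\rangle)|_\infty-\ell(\langle I,x\rangle)$ while $1+|x-t|_\infty\lesssim\max(\ell(\langle I,x\rangle),1)$, the computation for the factor $D$ in the proof of Proposition \ref{symmetricspecialcancellation} yields $1+\frac{|x+t|_\infty}{1+|x-t|_\infty}\gtrsim\rdist(\langle I,x\rangle,\mathbb B)$, so $D\big(1+\tfrac{|x+t|_\infty}{1+|x-t|_\infty}\big)\lesssim D(\rdist(\langle I,x\rangle,\mathbb B))$ by monotonicity of $D$ (absorbing the constant as in Remark \ref{constants}). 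Substituting these three estimates and integrating $|f(t)|$ over $t\in I$ gives exactly the asserted inequality.

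The whole argument is routine bookkeeping with the monotonicity of $L,S,D$ and the elementary relation $|x-t|_\infty\approx\ell(\langle I,x\rangle)$ valid for $t\in I$, $x\notin 3I$; the single non-mechanical step is the comparison $1+\frac{|x+t|_\infty}{1+|x-t|_\infty}\gtrsim\rdist(\langle I,x\rangle,\mathbb B)$, which I expect to be the main obstacle but which is precisely the geometric estimate already established in Section \ref{mainoff}, so it introduces no genuinely new difficulty.
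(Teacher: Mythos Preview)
Your proof is correct and follows essentially the same route as the paper: identify the limit in Definition \ref{pointdef} with the kernel integral via the smoothness condition, then estimate the increment $\int f(t)(K(x,t)-K(x',t))\,dt$ using \eqref{smoothcompact2} together with $|x-t|_\infty\approx\ell(\langle I,x\rangle)$ and the monotonicity of $L,S,D$. The only cosmetic difference is that you outsource the geometric estimate for the $D$-factor to the analogous computation in the proof of Proposition \ref{symmetricspecialcancellation}, whereas the paper redoes that computation inline at the end of the proof of this lemma; your sketch of it (via $(x+t)/2\in\langle I,x\rangle$) is in fact the same argument.
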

\begin{remark}
Notice that if $S(x)\leq |x|_{\infty }^{\beta}$ with $\beta>0$, then, $T(f)$ is 
H\"older-continuous with parameter $\delta +\beta $ which is better than in the case when $T$ is only a bounded singular integral operator.
\end{remark}
\begin{proof}
We first check the integral representation. We note that the integral in the right hand side of (\ref{intrep}) converges absolutely since by hypothesis $|t-x|_{\infty }\geq \dist_{\infty }(x,I)\geq \ell (I)$ and so, 
$$
\Big| \int f(t)K(x,t)dt\Big|
\leq \|f\|_{1}\frac{1}{|I|}.
$$
Now, let $x\in \mathbb R^{n}\backslash (3I)$, $\epsilon<2\ell(I)/5$ be fixed and
$\Phi_{x,\epsilon }$ as in Lemma \ref{pointdef0}. As before, for $t\in \supp f$, 
$y\in \supp \Phi_{x,\epsilon}$, we have
$\ell(I)<|t-x|_{\infty }$, $|y-x|_{\infty }\leq \epsilon /2<\ell(I)/2$ and
$
|t-y|_{\infty }\geq  |t-x|_{\infty }- |x-y|_{\infty }\geq \ell(I)/2>0
$. 
Hence, 
$f(t)$ and 
$\phi_{x,\epsilon }(y)$ have disjoint compact supports and we can write
$$
\langle T(f), \Phi_{x,\epsilon }\rangle
=\int \! \int f(t)\Phi_{x,\epsilon }(y)K(t,y)dtdy
= \int \! \int f(t)\Phi(y)K(t,x+\epsilon y)dtdy .
$$
Moreover, for  all $y\in \supp \Phi$ we have
$2|\epsilon y|_{\infty }<\epsilon <\ell(I)\leq |t-x|_{\infty }$ and so, 
by the smoothness condition of the kernel, we bound as follows:
\begin{align*}
\Big|\langle T(f), \Phi_{x,\epsilon }\rangle &-\int f(t)K(t,x)dt\Big|
\\
&=\Big| \int \! \int f(t)\Phi(y)K(t,x+\epsilon y)dtdy-\int \! \int f(t)\Phi (y)K(t,x)dtdy\Big|
\\
&
\leq \int \! \int |f(t)| |\Phi(y)| |K(t,x+\epsilon y)-K(t,x)|dtdy
\\
&\lesssim \int \! \int |f(t)| \frac{\epsilon |y|_{\infty }^{\delta }}{|t-x|_{\infty }^{n+\delta }}F_{K}(t,x)dtdy
\\
&
\lesssim \frac{\epsilon }{|I|^{1+\frac{\delta }{n}}}\int |f(t)|dt 
\end{align*}
which tends to zero as required.

Now, we check the H\"older-continuity of $T(f)$ in $\mathbb R^{n}\backslash 3I$.  
For all $x,x' \in \mathbb R^{n}\backslash (3I)$
with $|x-x'|_{\infty }<\ell(I)/2$ we have  
$2|x-x'|_{\infty }<\ell(I)\leq |x-t|_{\infty }$ and so, 
\begin{align*}
|T(f)(x)&-T(f)(x')|=\Big| \int f(t)(K(t,x)-K(t,x'))dt\Big| 
\\
&\lesssim \int |f(t)|\frac{|x-x'|_{\infty }^{\delta }}{|t-x|_{\infty }^{n+\delta }}
L(|t-x|_{\infty })S(|x-x'|_{\infty })
D\Big(1+\frac{|t+x|_{\infty }}{1+|t-x|_{\infty }}\Big)
dt
\\
&\lesssim |I|^{-1}\|f\|_{L^{1}(\mathbb R^{n})}
 \frac{|x-x'|_{\infty }^{\delta }}{\ell(I)^{\delta}}
 L(\ell(I))S(|x-x'|_{\infty })
D(\rdist(I,\mathbb B)) .
\end{align*}
Notice that, since $|t-c(I)|_{\infty }\leq \ell(I)/2$, $|t-x|_{\infty }\leq \ell(I)/2+|c(I)-x|_{\infty }\leq \ell(\langle I,x\rangle )$, 
$|t+x|_{\infty }\geq |x+c(I)|_{\infty }-\ell(I)/2$ and $|c(\langle I,x\rangle )-(x+c(I))/2|\leq \ell(\langle I,x\rangle )/2$, 
we have 
\begin{align*}
1+\frac{|t+x|_{\infty }}{1+|t-x|_{\infty }}&\gtrsim \frac{1}{4}\Big(4+\frac{2|c(\langle I,x\rangle )|_{\infty }-\frac{3}{2}(\ell(\langle I,x\rangle )+1)}{1+\ell(\langle I,x\rangle )}\Big)
\\
&\gtrsim \frac{5}{2}+2\frac{|c(\langle I,x\rangle )|_{\infty }}{1+\ell(\langle I,x\rangle )}
\gtrsim 2\rdist(\langle I,x\rangle ,\mathbb B) .
\end{align*}

\end{proof}

The following proposition will allow us to describe, in Corollary \ref{corpointwiseexpression}, the behavior of 
$T(\phi_{I})$ for any given bump function $\phi_{I}$ adapted and supported on a cube $I$,
stating explicitly a gain in the decay and smoothness that depends on the decay and smoothness of the operator kernel.

\begin{proposition}\label{pointwiseexpression}
Let $f$ be an integrable function supported on a cube $I$. Then, 
$$
|T(f)(x)|\lesssim w_{I}(x)^{-n}
F_{K}(\langle I, x\rangle)
|I|^{-1}\|f\|_{L^{1}(I)}
$$ 
for all $x\in \mathbb R^{n}$ such that $x\notin 5I$. Moreover, 
\begin{multline*}
|T(f)(x)-T(f)(x')|
\lesssim \frac{|x-x'|_{\infty }^{\delta }}{\ell(I)^{\delta }}w_{I}\Big(\frac{x+x'}{2}\Big)^{-(n+\delta )}\\
L(\ell(\langle I, x\rangle))S(|x-x'|_{\infty })
D(\rdist(\langle I, x\rangle,\mathbb B))
|I|^{-1}\|f\|_{L^{1}(I)}
\end{multline*}
for all $x,x'\in \mathbb R^{n}$ such that $|x-x'|_{\infty }<\ell(I)/2$ and 
$\langle x,x'\rangle\cap 5I=\emptyset $.
\end{proposition}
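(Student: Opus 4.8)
Here is how I would go about proving Proposition \ref{pointwiseexpression}.

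The plan is to obtain both estimates directly from the representation $T(f)(x)=\int_{I}f(t)K(x,t)\,dt$, which by Lemma \ref{contTf} is valid for $x\notin 3I$ and hence throughout both regimes of the statement. In Lemma \ref{contTf} we were deliberately crude, using only $|x-t|_{\infty}\geq\ell(I)$ in the denominator; the improvement here comes from keeping the true size $|x-t|_{\infty}\approx|x-c(I)|_{\infty}\approx\ell(I)\,w_{I}(x)$, which holds for $t\in I$ whenever $x\notin 5I$ since then $|x-c(I)|_{\infty}>\tfrac{5}{2}\ell(I)$. This is exactly what upgrades the bare powers $|x-t|_{\infty}^{-n}$ and $|x-t|_{\infty}^{-(n+\delta)}$ to $|I|^{-1}w_{I}(x)^{-n}$ and $\ell(I)^{-\delta}|I|^{-1}w_{I}(\tfrac{x+x'}{2})^{-(n+\delta)}$ respectively.

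For the pointwise bound I would feed the kernel decay \eqref{decaycompact2} into $|T(f)(x)|\le\int_{I}|f(t)|\,|K(x,t)|\,dt$. For $t\in I$ and $x\notin 5I$ one has $|x-t|_{\infty}\approx|x-c(I)|_{\infty}\approx\ell(\langle I,x\rangle)$, recalling $\ell(\langle I,x\rangle)\approx\ell(I)+|x-c(I)|_{\infty}$ from Section \ref{ecandrdist}; so, using monotonicity of $L$ and $S$ together with Remark \ref{constants}, the factors $L(|x-t|_{\infty})$ and $S(|x-t|_{\infty})$ may be replaced by $L(\ell(\langle I,x\rangle))$ and $S(\ell(\langle I,x\rangle))$, and the chain of inequalities already proved in the last display of Lemma \ref{contTf} gives $1+\tfrac{|x+t|_{\infty}}{1+|x-t|_{\infty}}\gtrsim\rdist(\langle I,x\rangle,\mathbb B)$, whence the remaining factor is $\lesssim D(\rdist(\langle I,x\rangle,\mathbb B))$. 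Since also $|x-t|_{\infty}^{-n}\approx|I|^{-1}w_{I}(x)^{-n}$, integrating in $t$ yields $|T(f)(x)|\lesssim w_{I}(x)^{-n}F_{K}(\langle I,x\rangle)|I|^{-1}\|f\|_{L^{1}(I)}$, since $F_{K}(\langle I,x\rangle)=F_{K}(\langle I,x\rangle,\langle I,x\rangle,\langle I,x\rangle)$.

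For the H\"older bound I would write $T(f)(x)-T(f)(x')=\int_{I}f(t)(K(x,t)-K(x',t))\,dt$. Since $\langle x,x'\rangle\cap 5I=\emptyset$ forces $x,x'\notin 5I$, and since $2|x-x'|_{\infty}<\ell(I)<\dist_{\infty}(x,I)\le|x-t|_{\infty}$ for $t\in I$, the smoothness condition \eqref{smoothcompact2} applies to $K(x,t)-K(x',t)$ (only the first variable moving), contributing a factor $\frac{|x-x'|_{\infty}^{\delta}}{|x-t|_{\infty}^{n+\delta}}L(|x-t|_{\infty})S(|x-x'|_{\infty})D\!\big(1+\tfrac{|x+t|_{\infty}}{1+|x-t|_{\infty}}\big)$. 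Because $|x-x'|_{\infty}<\ell(I)/2$ and the relevant points all lie at distance $\gtrsim\ell(I)$ from $I$, one has $|x-t|_{\infty}\approx|x'-t|_{\infty}\approx|\tfrac{x+x'}{2}-t|_{\infty}\approx\ell(I)\,w_{I}(\tfrac{x+x'}{2})$, so $|x-t|_{\infty}^{-(n+\delta)}\approx\ell(I)^{-\delta}|I|^{-1}w_{I}(\tfrac{x+x'}{2})^{-(n+\delta)}$; the $L$- and $D$-factors are handled exactly as in the pointwise case, yielding $L(\ell(\langle I,x\rangle))$ and $D(\rdist(\langle I,x\rangle,\mathbb B))$ (with $\langle I,x\rangle$ freely interchangeable with $\langle I,x'\rangle$), while $S(|x-x'|_{\infty})$ is kept untouched. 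Integrating in $t$ then gives the stated inequality.

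The substance of the argument is thus just the bookkeeping of a few elementary geometric comparisons — $|x-t|_{\infty}\approx|x-c(I)|_{\infty}\approx\ell(I)w_{I}(x)\approx\ell(\langle I,x\rangle)$, the stability of $w_{I}$ under moving the point by less than $\ell(I)/2$ and under averaging two such points, and $\dist_{\infty}(x,I)\gtrsim\ell(\langle I,x\rangle)$ for $x\notin 5I$ — all immediate from $\ell(\langle I,x\rangle)\approx\ell(I)+|x-c(I)|_{\infty}$. The only mildly delicate inequality, $1+\tfrac{|x+t|_{\infty}}{1+|x-t|_{\infty}}\gtrsim\rdist(\langle I,x\rangle,\mathbb B)$, is already established in the proof of Lemma \ref{contTf} and can be invoked verbatim; I expect this, rather than anything conceptual, to be the only place demanding care.
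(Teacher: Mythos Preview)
Your approach is correct and is in fact more direct than the paper's. Once Lemma \ref{contTf} gives the integral representation $T(f)(x)=\int_{I}f(t)K(x,t)\,dt$ for $x\notin 3I$, everything indeed reduces to the elementary geometric comparisons you list, together with the kernel bounds \eqref{decaycompact2} and \eqref{smoothcompact2}; all of the ingredients you cite (in particular the inequality $1+\tfrac{|t+x|_\infty}{1+|t-x|_\infty}\gtrsim\rdist(\langle I,x\rangle,\mathbb B)$ from the end of the proof of Lemma \ref{contTf}) are available and apply as written.

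The paper instead argues by approximation: for the pointwise bound it places an $L^{1}$-normalized bump $\varphi_{J}$ on the tiny cube $J=x+\mathbb B_{\epsilon}$, invokes case a) of the proof of Proposition \ref{symmetricspecialcancellation3} to estimate $\langle T(f),\varphi_{J}\rangle$, and lets $\epsilon\to 0$; for the H\"older bound it does the same with the mean-zero function $\varphi_{J_{1}}-\varphi_{J_{2}}$ supported on $\langle J_{1},J_{2}\rangle$ and invokes case a) of Proposition \ref{symmetricspecialcancellation}. This has the virtue of recycling the off-diagonal machinery of Section \ref{mainoff} wholesale, keeping the paper self-consistent in method, but it re-proves the kernel estimates inside those propositions rather than using them directly. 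Your route bypasses that layer entirely and is shorter; the trade-off is that it does not showcase the connection with the bump-function estimates, which is part of the paper's narrative.
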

\begin{proof} 
We consider $\epsilon >0$ small enough so that it satisfies  
several inequalities stated along the proof. 
We denote by $\mathbb B_{\epsilon }=[-\epsilon/2,\epsilon/2]^{n}$ and, 
given $x\in \mathbb R^{n}$, we define $J=x+\mathbb B_{\epsilon }$. Let also  
$\varphi_{J}=C|J|^{-1}w_{J}(x)^{-N}$ be a positive bump function $L^{1}(\mathbb R^{n})$-adapted to $J$ with order one, decay $N$ and constant $C$ such that $\int \varphi_{J}(x)dx=1$. 

First, $\epsilon $ can be taken so that $\ell(J)\leq \ell(I)$. Moreover, 
the hypothesis $x\notin 5I$ implies that $\diam(I\cup J)>3\ell(I)$. Then, 
from the proof of Proposition \ref{symmetricspecialcancellation3} in its case a), we have 
\begin{eqnarray*}
|\langle T(f),\varphi_{J}\rangle |
&\lesssim &\| f\|_{L^{1}(\mathbb R^{n})}
\| \varphi_{J}\|_{L^{1}(\mathbb R^{n})}\diam(I\cup J)^{-n}F_{K}(\langle I, J\rangle)\\
&\lesssim & \rdist(I,J)^{-n}F_{K}(\langle I, J\rangle)|I|^{-1}\| f\|_{L^{1}(\mathbb R^{n})} .
\end{eqnarray*}

On the other hand, for $\epsilon $ small enough, we have 
$$
\ell (\langle I, x\rangle)\leq \ell (\langle I,J\rangle)\leq \ell (\langle I,x\rangle)+\epsilon/2
\leq 2\ell (\langle I,x\rangle)
$$ 
and 
$$
\frac{1}{2}\rdist(\langle I,x\rangle,\mathbb B)\lesssim \rdist(\langle I,J\rangle,\mathbb B)
\leq \rdist(\langle I,x\rangle,\mathbb B).
$$ 
These inequalities imply that
$$
F_{K}(\langle I,J\rangle)=L(\ell(\langle I,J\rangle))S(\ell(\langle I,J\rangle))D(\rdist(\langle I,J\rangle,\mathbb B))
\leq F_{K}(\langle I,x \rangle )
$$
omitting constants.

Now, taking limit when $\epsilon$ tends to zero, we get  
$$
\lim_{\epsilon \rightarrow 0}\langle T(f),\varphi_{J}\rangle
=\lim_{\epsilon \rightarrow 0}\int \int f(t) \varphi_{J}(y)K(t,y)dtdy
=\int f(t)K(t,x)dt=T(f)(x)
$$
by Lemma \ref{contTf} .
Finally then, 
\begin{eqnarray*}
|T(f)(x)|
&\lesssim &\rdist(I, x)^{-n}F_{K}(\langle I, x\rangle)|I|^{-1}\| f\|_{L^{1}(\mathbb R^{n})}\\
&\lesssim &\Big(1+\frac{|x-c(I)|_{\infty }}{\ell(I)}\Big)^{-n}F_{K}(\langle I, x\rangle)|I|^{-1}\| f\|_{L^{1}(\mathbb R^{n})}
\end{eqnarray*}
which proves the first inequality. 

We prove now the second one.
Let $x,x'\in \mathbb R^{n}$ as stated and let  $c=(x+x')/2$. Again, we consider $\epsilon >0$ to be small enough for our purposes. 
We define $J_{1}=x+\mathbb B_{\epsilon }$,
$J_{2}=x'+\mathbb B_{\epsilon }$ and the functions $\varphi_{J_{i}}$ for $i=1,2$ as before. Then, 
the function 
$\varphi_{J_{1}}-\varphi_{J_{2}}$ is supported on
$\langle J_{1}, J_{2}\rangle$ 
and it has mean zero. 

The hypotheses on $x$, $x'$ 
imply $|x-x'|_{\infty }<\ell(I)/2$ and $\diam(I\cup \langle x,x'\rangle)>3\ell(I)$. Then, for all $t\in I$ 
we have 
$|t-c|_{\infty }>\diam(I\cup \langle x,x'\rangle)/2>\ell(I)$.
Moreover, we can take 
$0<\epsilon <|x-x'|_{\infty }/2$ small enough so that 
$\diam(I\cup \langle J_{1},J_{2}\rangle )>3\ell(I)$, 
$|t-c|_{\infty }\geq \diam (I\cup \langle J_{1},J_{2}\rangle)/2$
and 
$J_{1}\cap J_{2}=\emptyset $. The latter property further implies that $\ell(\langle J_{1},J_{2}\rangle)
=|x-x'|_{\infty }+\epsilon < \ell(I)$.

On the other hand, since $c=c(\langle J_{1}, J_{2}\rangle)$, we have 
$|y-c|_{\infty }\leq \ell(\langle J_{1}, J_{2}\rangle)/2$ 
for all $y\in \langle J_{1}, J_{2}\rangle$.
Also notice that $\ell(J_{1})=\ell(J_{2})$. 

Finally then, 
$2|x-c|_{\infty }=|x-x'|_{\infty }<\ell(I)\leq |t-c|_{\infty }$, which allows to use the integral representation and the smoothness property of the compact Calder\'on-Zygmund kernel.

After establishing all these inequalities, we can 
repeat the proof of case a) in Proposition \ref{symmetricspecialcancellation} to obtain
\begin{align*}
|\langle T(f),&\varphi_{J_{1}}-\varphi_{J_{2}}\rangle |
=\Big|\int f(t) (\varphi_{J_{1}}(y)-\varphi_{J_{2}}(y))
(K(t,y)-K(t,c))dtdy \Big|
\\
&\leq \int |f(t)| |\varphi_{J_{1}}(y)-\varphi_{J_{2}}(y)|
\frac{|y-c|_{\infty }^{\delta }}{|t-c|_{\infty }^{n+\delta }}
\\
&\hskip20pt L(|t-c|_{\infty })S(|y-c|_{\infty })
D\Big(1+\frac{|t+c|_{\infty }}{1+|t-c|_{\infty }}\Big)
dtdy
\\
&\lesssim \| f\|_{L^{1}(\mathbb R^{n})} \| \varphi_{J_{1}}\|_{L^{1}(\mathbb R^{n})}
\frac{\ell(\langle J_{1}, J_{2}\rangle)^{\delta }}{\diam(I\cup \langle J_{1}, J_{2}\rangle)^{n+\delta }}
\\
&\hskip20pt L(\ell(\langle I, J_{1}\cup J_{2}\rangle))
S(\ell(\langle J_{1},J_{2}\rangle))
D(\rdist (\langle I, J_{1}\cup J_{2}\rangle),\mathbb B))
\\
&\lesssim 
\Big(\frac{\ell(\langle J_{1},J_{2}\rangle)}{\ell(I)}\Big)^{\delta }
\rdist(I,\langle J_{1}, J_{2}\rangle)^{-(n+\delta )}
\\
&\hskip20pt L(\ell(\langle I, J_{1}\cup J_{2}\rangle))
S(\ell(\langle J_{1},J_{2}\rangle))
D(\rdist (\langle I, J_{1}\cup J_{2}\rangle),\mathbb B))
|I|^{-1}\| f\|_{L^{1}(\mathbb R^{n})} .
\end{align*}

Now, we have the following relationships:
\begin{align*}
\ell (\langle I,x\rangle)&\leq \ell (\langle I,J_{1}\cup J_{2}\rangle)\leq \ell (\langle I,x\rangle)+|x-x'|_{\infty }+\epsilon/2\\
&\leq \ell (\langle I,x\rangle)+\ell (I)+\ell(I)/4\leq 3\ell (\langle I, x\rangle),\\
\ell(\langle J_{1},J_{2}\rangle)&\leq 2|x-x'|_{\infty },\\
\frac{1}{2}\rdist(\langle I,\{ x\}\rangle,\mathbb B)&\lesssim \rdist(\langle I,J_{1}\cup J_{2}\rangle,\mathbb B)\leq \rdist(\langle I,\{ x\}\rangle,\mathbb B),\\
\rdist(I,\langle J_{1}, J_{2}\rangle )&
\geq \rdist(I,c )\gtrsim 1+\frac{|\frac{x+x'}{2}-c(I)|_{\infty }}{\ell(I)}.
\end{align*}

These inequalities imply
$$
L(\ell(\langle I, J_{1}\cup J_{2}\rangle))
S(\ell(\langle J_{1},J_{2}\rangle))
D(\rdist (\langle I, J_{1}\cup J_{2}\rangle),\mathbb B))
$$
$$
\leq 
L(\ell(\langle I, x\rangle))
S(|x-x'|_{\infty })
D(\rdist (\langle I, x\rangle),\mathbb B))
$$
omitting constants. 

Finally then, we take limit when $\epsilon$ tends to zero to get
\begin{multline*}
|T(f)(x)-T(f)(x')|
\lesssim \frac{|x-x'|_{\infty }^{\delta }}{\ell(I)^{\delta }}
\Big(1+\frac{|\frac{x+x'}{2}-c(I)|_{\infty }}{\ell(I)}\Big)^{-(n+\delta )}\\
L(\ell(\langle I, x\rangle))
S(|x-x'|_{\infty })
D(\rdist (\langle I, x\rangle,\mathbb B))
|I|^{-1}\| f\|_{L^{1}(\mathbb R^{n})}
\end{multline*}
which proves the second inequality. 
\end{proof}

\begin{corollary}\label{corpointwiseexpression}
Let $\phi_{I}$ be a bump function adapted and supported to $I$ with 
constant $C>0$, decay $N$ and order zero. Then, in $\mathbb R^{n}\backslash 5I$,
$T(\phi_{I})$ satisfies the definition of a bump function adapted to $I$ with 
constant $C>0$, decay $n$, order one and parameter $\delta $, plus an extra factor in decay due to compactness. 
\end{corollary}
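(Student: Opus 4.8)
The plan is to read off both assertions from Proposition \ref{pointwiseexpression} applied to $f=\phi_{I}$. First I would record the trivial normalization bound: since $\phi_{I}$ is $L^{2}$-normalized and supported on $I$, where $w_{I}\le 3/2$, we have $|\phi_{I}|\le C|I|^{-1/2}$ on $I$, hence $\|\phi_{I}\|_{L^{1}(I)}\le C|I|^{1/2}$, with no hypothesis needed on the input decay $N$. Inserting this into the first inequality of Proposition \ref{pointwiseexpression} gives, for $x\notin 5I$,
$$
|T(\phi_{I})(x)|\lesssim w_{I}(x)^{-n}F_{K}(\langle I,x\rangle)|I|^{-1}\|\phi_{I}\|_{L^{1}(I)}\lesssim C|I|^{-1/2}\Big(1+\frac{|x-c(I)|_{\infty}}{\ell(I)}\Big)^{-n}F_{K}(\langle I,x\rangle),
$$
which is exactly the pointwise bound (\ref{waveletdecay}) for an $L^{2}$-normalized order-zero bump adapted to $I$ of decay $n$ and constant $\lesssim C$, carrying the extra compactness factor $F_{K}(\langle I,x\rangle)$.

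For the smoothness part I would invoke the second inequality of the same proposition: whenever $|x-x'|_{\infty}<\ell(I)/2$ and $\langle x,x'\rangle\cap 5I=\emptyset$, it yields
$$
|T(\phi_{I})(x)-T(\phi_{I})(x')|\lesssim C\Big(\frac{|x-x'|_{\infty}}{\ell(I)}\Big)^{\delta}w_{I}\Big(\frac{x+x'}{2}\Big)^{-(n+\delta)}L(\ell(\langle I,x\rangle))S(|x-x'|_{\infty})D(\rdist(\langle I,x\rangle,\mathbb B))|I|^{-1/2}.
$$
The only step beyond bookkeeping is to convert $w_{I}((x+x')/2)^{-(n+\delta)}$ into a supremum of the form in (\ref{waveletderivative}). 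Since $(x+x')/2\in\langle x,x'\rangle$ and this cube has $|\cdot|_{\infty}$-diameter $|x-x'|_{\infty}<\ell(I)/2$, every $r\in\langle x,x'\rangle$ satisfies $|r-(x+x')/2|_{\infty}<\ell(I)/2$, so $w_{I}(r)\approx w_{I}((x+x')/2)$ with universal constants; using $w_{I}\ge1$ to drop the surplus $\delta$ in the exponent then gives
$$
w_{I}\Big(\frac{x+x'}{2}\Big)^{-(n+\delta)}\lesssim \sup_{r\in\langle x,x'\rangle}\Big(1+\frac{|r-c(I)|_{\infty}}{\ell(I)}\Big)^{-n}.
$$
Together with the previous display this exhibits $T(\phi_{I})$, on $\mathbb R^{n}\setminus5I$, as an $L^{2}$-normalized bump adapted to $I$ of decay $n$, order one and parameter $\delta$ (constant $\lesssim C$), multiplied by the decaying factor $L(\ell(\langle I,x\rangle))S(|x-x'|_{\infty})D(\rdist(\langle I,x\rangle,\mathbb B))$ that reflects the compactness of $K$.

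I do not expect a genuine obstacle beyond this weight comparison; the one point worth stating explicitly in the corollary is that, exactly as in Proposition \ref{pointwiseexpression}, the order-one estimate holds for pairs $x,x'$ with $\langle x,x'\rangle\cap5I=\emptyset$ and $|x-x'|_{\infty}<\ell(I)/2$ rather than for all $x,x'\notin5I$; for $|x-x'|_{\infty}\ge\ell(I)/2$ the difference is controlled directly by the size bound applied to each term.
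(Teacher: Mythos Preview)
Your proposal is correct and follows essentially the same approach as the paper: both proofs apply Proposition~\ref{pointwiseexpression} directly to $f=\phi_{I}$, using $\|\phi_{I}\|_{L^{1}(I)}\lesssim C|I|^{1/2}$ to read off the decay and smoothness bounds. You add a small extra step---converting $w_{I}((x+x')/2)^{-(n+\delta)}$ into the $\sup_{r\in\langle x,x'\rangle}$ form of (\ref{waveletderivative})---that the paper omits, simply recording the bounds as they emerge from the proposition.
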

\begin{proof} We simply rewrite the statement of Proposition \ref{pointwiseexpression}:
$$
|T(\phi_{I})(x)|\lesssim C|I|^{-\frac{1}{2}}w_{I}(x)^{-n}F_{K}(\langle I, x\rangle)
$$
and
\begin{multline*}
|T(\phi_{I})(x)-T(\phi_{I})(x')|
\lesssim C\frac{|x-x'|_{\infty }^{\delta }}{\ell(I)^{\delta }}|I|^{-\frac{1}{2}}w_{I}\Big(\frac{x+x'}{2}\Big)^{-(n+\delta )}\\
L(\ell(\langle I, x\rangle))S(|x-x'|_{\infty })
D(\rdist(\langle I, x\rangle,\mathbb B))
\end{multline*}
for all $x,x'\in \mathbb R^{n}$ such that $x\notin 5I$, $|x-x'|_{\infty }<\ell(I)/2$ and $\langle x,x'\rangle \cap 5I=\emptyset $. 
\end{proof}

Notice that, even though $\phi_{I}$ has compact support and 
decays at infinity as fast as $|x|^{-N}$ for any large $N>0$, 
$T(\phi_{I})$ does not have in general compact support and its decay is only comparable to $|x|^{-n}$. Both facts are typical of bounded singular integral operators. However, if the operator is associated with a compact 
Calder\'on-Zgymund kernel, the decay of $T(\phi_{I})$ improves depending on the rate of decay of the factor $L(\ell(\langle I, x\rangle))$ when $x$ tends to infinity. 

On the other hand, note 
the gain in smoothness with respect bounded singular integrals
provided by the factor $|x-x'|_{\infty }^{\delta }S(|x-x'|_{\infty })$.


We show now an off-diagonal estimate for general functions 
deduced directly from the previous pointwise bound.

\begin{proposition}\label{lastoff}
Let $f$ an integrable function supported on a cube $I$. Then, for all $1<p<\infty $ and all $\lambda >1$, we have
$$
\| T(f)\chi_{(\lambda I)^{c}}\|_{L^{p}(\mathbb R^{n})}\lesssim \frac{1}{(1+\lambda )^{\frac{n}{p'}}}
\sup_{x\in (\lambda I)^{c}}F_{K}(\langle I, x\rangle) \| f\|_{L^{p}(\mathbb R^{n})}.
$$
\end{proposition}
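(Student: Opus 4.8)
The plan is to start from the pointwise bound in Proposition~\ref{pointwiseexpression}. For $x\notin \lambda I\subset 5I$ (we may assume $\lambda\geq 5$, since for $1<\lambda<5$ the estimate follows from $L^{p}$-boundedness-type reasoning or, more directly, from enlarging $\lambda$ at the cost of a universal constant absorbed into the implicit constant), we have
\[
|T(f)(x)|\lesssim w_{I}(x)^{-n}\,F_{K}(\langle I,x\rangle)\,|I|^{-1}\|f\|_{L^{1}(I)}.
\]
First I would pull the factor $F_{K}(\langle I,x\rangle)$ out by replacing it with $\sup_{x\in(\lambda I)^{c}}F_{K}(\langle I,x\rangle)$, so that what remains to estimate is $\||I|^{-1}w_{I}(\cdot)^{-n}\chi_{(\lambda I)^{c}}\|_{L^{p}(\mathbb R^{n})}\,\|f\|_{L^{1}(I)}$. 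Then I would bound $\|f\|_{L^{1}(I)}\leq |I|^{1/p'}\|f\|_{L^{p}(\mathbb R^{n})}$ by H\"older, which accounts for the remaining $|I|$-power once combined with the $L^{p}$-norm of $w_{I}^{-n}$.

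The core computation is therefore the weighted tail integral
\[
\Big\| |I|^{-1}w_{I}(\cdot)^{-n}\Big\|_{L^{p}((\lambda I)^{c})}^{p}
= |I|^{-p}\int_{(\lambda I)^{c}}\Big(1+\frac{|x-c(I)|_{\infty}}{\ell(I)}\Big)^{-np}\,dx.
\]
After the change of variables $x=c(I)+\ell(I)y$ this becomes $|I|^{-p}\,|I|\int_{|y|_{\infty}>\lambda/2}(1+|y|_{\infty})^{-np}\,dy$. Since $p>1$ gives $np>n$, the integral converges and, by splitting into dyadic shells $2^{k}\lambda/2<|y|_{\infty}\leq 2^{k+1}\lambda/2$, is comparable to $\lambda^{n-np}=\lambda^{-n(p-1)}$ (up to a constant depending on $n,p$). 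Hence the $L^{p}$-quantity above is $\lesssim |I|^{(1-p)/p}\,\lambda^{-n/p'}$, using $n(p-1)/p=n/p'$. Multiplying by $\|f\|_{L^{1}(I)}\leq |I|^{1/p'}\|f\|_{L^{p}}$ and noting $(1-p)/p+1/p'=0$, the $|I|$-powers cancel and we are left with $\lambda^{-n/p'}\sup_{x\in(\lambda I)^{c}}F_{K}(\langle I,x\rangle)\|f\|_{L^{p}(\mathbb R^{n})}$; replacing $\lambda^{-n/p'}$ by $(1+\lambda)^{-n/p'}$ is harmless for $\lambda>1$.

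I do not expect any genuine obstacle here: the only mild points of care are (i) handling the range $1<\lambda<5$ where Proposition~\ref{pointwiseexpression} does not directly apply, which one disposes of by a crude estimate on $T(f)\chi_{(\lambda I)^{c}\cap 5I}$ or simply by monotonicity of both sides in $\lambda$ and the trivial bound when $\lambda I$ and $5I$ are comparable, and (ii) making sure the $\sup$ of $F_{K}$ is taken over $(\lambda I)^{c}$ exactly as in the statement rather than over a slightly larger set, which is immediate since we only use the pointwise bound at points $x\notin\lambda I$. The rest is the routine dyadic tail estimate above.
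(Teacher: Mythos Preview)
Your proposal is correct and follows essentially the same route as the paper: apply the pointwise bound of Proposition~\ref{pointwiseexpression}, replace $F_{K}(\langle I,x\rangle)$ by its supremum over $(\lambda I)^{c}$, evaluate the tail integral of $w_{I}^{-np}$ to produce the factor $(1+\lambda)^{-n/p'}$, and conclude with H\"older on $\|f\|_{L^{1}(I)}$. The paper's argument is in fact terser than yours and, like your sketch, does not spell out the case $1<\lambda\leq 5$ where Proposition~\ref{pointwiseexpression} is not directly applicable; note however that your suggested fix via ``$L^{p}$-boundedness-type reasoning'' is not available here, since in this section $T$ is only assumed to have a compact Calder\'on--Zygmund kernel and no weak boundedness or $T(1)$ condition is imposed.
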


\begin{proof} From Proposition \ref{pointwiseexpression}, we have
\begin{eqnarray*}
\| T(f)\chi_{(\lambda I)^{c}}\|_{L^{p}(\mathbb R^{n})}^{p}&\lesssim &\int_{(\lambda I)^{c}}
\frac{F_{K}(\langle I, x\rangle)^{p}}{(1+\ell(I)^{-1}|x-c(I)|_{\infty })^{np}}dx |I|^{-p}\| f\|_{L^{1}(I)}^{p}
\\
&\lesssim &
\frac{\ell(I)^{n}}{(1+\lambda )^{n(p-1)}}
\sup_{x\in (\lambda I)^{c}}F_{K}(\langle I, x\rangle)^{p}
 |I|^{-p}\| f\|_{L^{1}(I)}^{p}
\\
&=&\frac{1}{(1+\lambda )^{n(p-1)}}
\sup_{x\in (\lambda I)^{c}}F_{K}(\langle I, x\rangle)^{p}
 |I|^{-(p-1)}\| f\|_{L^{1}(I)}^{p}
\end{eqnarray*}
which, by H\"older, is smaller than the right hand side of the stated inequality. 
\end{proof}

We end the paper by adding few remarks to the previous proposition. 
We first note that, in the particular case of $f$ being a bump function, we obtain
$$
\| T(\phi_{I})\chi_{(\lambda I)^{c}}\|_{L^{p}(\mathbb R^{n})}
\lesssim |I|^{\frac{1}{p}-\frac{1}{2}}\frac{1}{(1+\lambda )^{\frac{n}{p'}}}
\sup_{x\in (\lambda I)^{c}}F_{K}(\langle I, x\rangle) .
$$

We also remind that,
for bounded but not compact singular integral operators, the analog of Proposition \ref{lastoff} implies that 
for a fixed cube $I$ and $\| f\|_{L^{p}(I)}\leq 1$, we have
$$
\lim_{\lambda \rightarrow \infty }\| T(f)\chi_{(\lambda I)^{c}}\|_{L^{p}(\mathbb R^{n})}=0
$$
with a rate of decay at most of order $\lambda^{\frac{n}{p'}}$. 
However, for compact singular integral operators, the extra factor stated in Proposition \ref{lastoff} ensures that 
there is always 
an extra gain in decay. 
To see this, 
we note that 
for all $x\in (\lambda I)^{c}$ we have $\lambda I\subset 3\langle I, x\rangle $ and so,
$\lambda \ell(I)\leq 3\ell(\langle I, x\rangle )$.
Hence, 
$
F_{K}(\langle I, x\rangle)\lesssim L(\ell(\langle I, x\rangle ))
\lesssim L(\lambda \ell(I))
$
and since $\lim_{\lambda \rightarrow \infty } L(\lambda \ell(I))=0$, the rate of decay is now 
at worst as fast as $\lambda^{\frac{n}{p'}}L(\lambda \ell(I))$.

Finally, since we also have the bound
$$
F_{K}(\langle I, x\rangle)\lesssim 
L(\ell(I))D\Big(1+\frac{|c(I)|_{\infty }}{1+\lambda \ell(I)}\Big)
$$
we deduce that for fixed $\lambda$ and $\| f\|_{L^{p}(I)}\leq 1$ we have that
$$
\displaystyle \lim_{\ell(I)\rightarrow \infty }\| T(f)\chi_{(\lambda I)^{c}}\|_{L^{p}(\mathbb R^{n})}
=0
$$
while, for fixed $\lambda$, $\| f\|_{L^{p}(I)}\leq 1$ and fixed $\ell(I)$, we also get
$$
\lim_{|c(I)|_{\infty }\rightarrow \infty }\| T(f)\chi_{(\lambda I)^{c}}\|_{L^{p}(\mathbb R^{n})}=0.
$$
The last two properties do not hold in general for bounded singular integral operators.


\begin{thebibliography}{1}

\bibitem{AHLMT}
P.~Ausher, S.~Hofmann, M.~Lacey, A.~McIntosh, and Ph. Tchamitchian, \textit{The
  solution of the Kato square root problem for second order elliptic operators
  on $R^n$}, Ann. of Math. \textbf{2} (2002), 633--654.

\bibitem{AKMP}
P.~Auscher, C.~Kriegler, S.~Monniaux, and P.~Portal, \textit{Singular integral
  operators on tent spaces}, J. Evol. Equ. \textbf{12} (2012), no.~4, 741--765.


\bibitem{AM}
P.~Ausher and Jose-Maria Martell, \textit{Weighted norm inequalities, off-diagonal estimates and 
elliptic operators. Part II: off-diagonal estimates on spaces of homogeneous type},
Journal of Evolution Equations \textbf{7} (2007) no.~2, 265--316.

 


\bibitem{AT}
P.~Ausher and Ph. Tchamitchian, \textit{Bases d'Ondelettes sur les Courbes
  Corde-Arc, Noyau de Cauchy et Espaces de Hardy Associ\'es}, Rev. Mat. Ib.
  \textbf{5} (1989), no.~3, 4, 139--170.

\bibitem{AKM2}
A.~Axelson, S.~Keith, and A.~McIntosh, \textit{Quadratic estimates and functional
  calculi of perturbated dirac operators}, Invent. Math. \textbf{163} (2006),
  no.~3, 455--497.

\bibitem{AKM}
\bysame, \textit{The Kato square problem for mixed boundary problems}, J.
  London Math. Soc. \textbf{74} (2006), no.~1, 113--130.

\bibitem{BCR}
G.~Beylkin, R.R. Coifman, and V.~Rokhlin, \textit{Fast Wavelet Transforms and
  Numerical Algorithms I}, Communications on Pure and Applied Mathematics
  \textbf{44} (1991), 141--183.

\bibitem{CoifMeybook}
R.R. Coifman and Y.~Meyer, \textit{Wavelets, Calderon-Zygmund and Multilinear
  operators}, Cambridge Univ. Press, Cambridge, 1997.

\bibitem{HoMA}
S.~Hofmann and Jose-Maria Martell, \textit{$L^p$ bounds for Riesz transforms and
  square roots associated to second order elliptic operators}, Publ. Math.
  \textbf{47} (2003), 497--515.

\bibitem{HoMayMcI}
S.~Hofmann, S.~Mayboroda, and A.~McIntosh, \textit{Second order elliptic
  operators with complex bounded measurable coefficients in $L^p$, Sobolev and
  Hardy spaces}, Ann. Sci. \'Ecole Norm. Sup. \textbf{4} (2011), no.~44,
  723--800.

\bibitem{HyNa}
T.~Hyt{\"o}nen and F.~Nazarov, \textit{The local T(b) Theorem with rough test
  functions}. To appear (arxiv:1201.0648)

\bibitem{Ro}
A.~Rosen, \textit{Square function and maximal function estimates for operators
  beyond divergence form equations}, J. Evol. Equ. \textbf{13} (2013),
  651--674.

\bibitem{V}
P.~Villarroya, \textit{A characterization of
  compactness for singular integrals}. Submitted (arXiv:1211.0672).


\bibitem{YQX}
Yang~Qi Xiang, \textit{Fast algorithms for Calder\'on-Zygmund singular integral
  operators}, Applied and Computational Harmonic Analyisis \textbf{3} (1996),
  no.~2, 120--126.
  
  
\end{thebibliography}

\end{document}